\newcommand{\fr}[2]{\frac{#1}{#2}}
\newcommand{\tfr}[2]{\tfrac{#1}{#2}}
\newcommand{\mf}[1]{\mathbf{#1}}
\newcommand{\mr}[1]{\mathrm{#1}}
\newcommand{\mcal}[1]{\mathcal{#1}}
\newcommand{\opn}[1]{\operatorname{#1}}
\newcommand{\R}{{\mathbb{R}}}
\newcommand{\qtq}[1]{\quad\text{#1}\quad}
\newcommand{\eps}{\varepsilon}
\theoremstyle{definition}
\newtheorem*{rmk}{Remark}
\newtheorem*{rmks}{Remarks}
\theoremstyle{plain}
\newtheorem{thm}{Theorem}[section]
\newtheorem{prop}[thm]{Proposition}
\newtheorem{lma}[thm]{Lemma}
\newtheorem{cor}[thm]{Corollary}
\numberwithin{equation}{section}
\begin{document}

	\title{Mass-critical inverse Strichartz theorems for 1d Schr\"{o}dinger 
	operators}
\author{Casey Jao, Rowan Killip, and Monica Visan}
	\maketitle
	\label{chapter:m-crit_inv_str}
	
	\begin{abstract}
          We prove inverse Strichartz theorems at $L^2$ regularity for
          a family of Schr\"{o}dinger evolutions in one space
          dimension.  Prior results rely on spacetime Fourier analysis
          and are limited to the translation-invariant equation
          $i\partial_t u = -\tfr{1}{2} \Delta u$.  Motivated by
          applications to the mass-critical Schr\"odinger equation
          with external potentials (such as the harmonic oscillator),
          we use a physical space approach.
	\end{abstract}
	
\section{Introduction}

In this paper, we prove an inverse Strichartz theorem for 
certain 
Schr\"{o}dinger 
evolutions on the real line with $L^2$ initial data. 
Recall that solutions to the linear Schr\"{o}dinger equation
\begin{align}
\label{e:intro_free_particle}
i\partial_t u = -\tfrac{1}{2} \Delta u \quad\text{with}\quad u(0, \cdot) \in L^2(\mf{R}^d),
\end{align}
satisfy the Strichartz inequality
\begin{align}
\label{e:intro_str}
\| u\|_{L^{\fr{2(d+2)}{2}}_{t,x} (\mf{R} \times \mf{R}^d)} \le C \| u(0, 
\cdot) 
\|_{L^2 (\mf{R}^d)}.
\end{align}
In this translation-invariant setting, it was proved that if $u$ comes close to saturating the 
above inequality, then the initial data $u(0)$ must exhibit some ``concentration"; see \cite{carles-keraani,merle-vega,moyua-vargas-vega,begout-vargas}.  We seek 
analogues of this result  when the right side of~\eqref{e:intro_free_particle} is replaced by a more general Schr\"{o}dinger 
operator $-\tfr{1}{2}\Delta + V(t,x)$. 

Such refinements of the Strichartz inequality have provided a key technical tool in the study of the
$L^2$-critical nonlinear Schr\"{o}dinger equation
\begin{align}
\label{e:intro_m-crit}
i\partial_t u = -\tfrac{1}{2}\Delta u \pm |u|^{\fr{4}{d}} u \quad\text{with}\quad u(0, \cdot) \in 
L^2(\mf{R}^d).
\end{align}
The term ``$L^2$-critical" or ``mass-critical" refers to the property that the 
rescaling
\begin{align*}
u(t,x) \mapsto u_{\lambda}(t,x) := \lambda^{\fr{d}{2}} u(\lambda^2 t, \lambda x), 
\quad \lambda > 0,
\end{align*}
preserves both the class of solutions and the conserved \emph{mass}
$
M[u] := \| u(t)\|_{L^2}^2 = \| u(0)\|_{L^2}^2.
$
An inverse theorem for~\eqref{e:intro_str} 
begets profile decompositions that underpin the large data theory 
by revealing 
how potential blowup 
solutions may concentrate. The reader 
may consult for instance the notes~\cite{claynotes} for a more 
detailed account of this connection. The initial-value problem \eqref{e:intro_m-crit} was shown to be globally wellposed in \cite{dodson_mass-crit_high-d,dodson_mass-crit_2d,dodson_mass-crit_1d, dodson_focusing, Killip2009, Killip2008, Tao2007}.

Characterizing near-optimizers of the 
inequality~\eqref{e:intro_str} 
involves significant technical challenges due to the presence of
noncompact symmetries. Besides invariance under rescaling and translations in 
space and time, the inequality also possesses \emph{Galilean invariance}
\begin{align*}
u(t,x)\mapsto u_{\xi_0}(t, x) := e^{i[x \xi_0 - \fr{1}{2}t |\xi_0|^2]} u(t, x - t 
\xi_0), \quad u_{\xi_0}(0) = e^{ix \xi_0} u(0), \quad \xi_0 \in \mf{R}^d.
\end{align*}
Because of this last degeneracy, the $L^2$-critical setting 
is much more delicate compared to variants of~\eqref{e:intro_str} with higher 
regularity Sobolev norms on the right side, such as the energy-critical 
analogue
\begin{align}
\label{e:intro_e-crit_str}
\| u\|_{L^{\fr{2(d+2)}{d-2}}_{t,x}(\mf{R} \times \mf{R}^d)} \le C \| \nabla 
u(0, \cdot)\|_{L^2(\mf{R}^d)}.
\end{align}
In particular, Littlewood-Paley theory has
little use when seeking an inverse to \eqref{e:intro_str} because $u(0)$ can concentrate anywhere in 
frequency space, not necessarily near the origin. The works cited above use
spacetime orthogonality arguments and appeal to Fourier restriction theory, 
such as Tao's bilinear estimate for paraboloids (when $d \ge 3$)~\cite{Tao2003}.

Ultimately, we wish to consider the large data theory for the equation
\begin{align}
\label{e:intro_m-crit_v}
i\partial_t u =  -\fr{1}{2}\Delta  u + V  u \pm |u|^{\fr{4}{d}} u, 
\quad u(0, \cdot) \in L^2(\mf{R}^d),
\end{align}
where $V(x)$ is a real-valued potential.  The main example we have in mind is 
the harmonic oscillator $V = 
\sum_j \omega_j^2 
x_j^2$, which has obvious physical relevance and arises in the study of 
Bose-Einstein condensates~\cite{zhang_bec}. Although the scaling symmetry is 
broken, solutions initially concentrated at a point 
are well-approximated for short times by (possibly modulated) solutions to the 
genuinely scale-invariant mass-critical
equation~\eqref{e:intro_m-crit}. As described in Lemma~\ref{l:galilei} below, 
the harmonic oscillator also admits a more 
complicated analogue of Galilean invariance. This is related to the fact that 
$u$ solves equation~\eqref{e:intro_m-crit} iff its \emph{Lens transform} 
$\mcal{L} u$ satisfies equation~\eqref{e:intro_m-crit_v} with $V = 
\tfr{1}{2}|x|^2$, where
\begin{align*}
\mcal{L} u (t,x) := \tfr{1}{(\cos t)^{d/2}} u \Bigl( \tan t, \tfrac{x}{\cos t} 
\Bigr) e^{-\fr{i |x|^2 \tan t}{2}}.
\end{align*}
The energy-critical counterpart 
to~\eqref{e:intro_m-crit_v} with 
$V = 
\tfr{1}{2}|x|^2$ was recently 
studied by the first author~\cite{me_quadratic_potential}. 

While the Lens transform may be inverted to deduce global wellposedness for the 
mass-critical
harmonic oscillator when $\omega_j \equiv \tfr{1}{2}$, this miraculous
connection with 
equation~\eqref{e:intro_m-crit} evaporates as soon as the $\omega_j$ are not 
all 
equal. Studying the equation in greater generality therefore requires a 
more 
robust line of attack,
such as the concentration-compactness and rigidity paradigm. 
To implement that strategy one needs appropriate inverse $L^2$ Strichartz
estimates.  This is no 
small matter since the Fourier-analytic techniques underpinning the 
proofs of the constant-coefficient theorems---most notably, Fourier 
restriction estimates---are ill-adapted to large 
variable-coefficient perturbations. 

We present a different approach to these inverse estimates in one space 
dimension. By 
eschewing Fourier analysis for physical space arguments, we can 
treat a family of Schr\"{o}dinger operators that includes the free particle and 
the harmonic oscillator. Moreover, our potentials are allowed to depend on time.

	\subsection{The setup}
	Consider a (possibly time-dependent) Schr\"{o}dinger operator on the real 
	line
	\[
	H(t) = -\tfrac{1}{2} \partial^2_x + V(t, x) \quad x \in \mf{R},
	\]
	and assume $V$ is a subquadratic potential.  Specifically, we require that $V$ satisfies the following hypotheses:
	\begin{itemize}
		\item For each $k \ge 2$, there exists there exists $M_k < \infty$  so 
		that
		\begin{equation}
		\label{e:V_h1}
		\|V(t, x)\|_{L^\infty_t L^\infty_x( |x| \le 1)} + \| \partial^k_x 
		V(t, x) \|_{L^\infty_{t,x}} +
		\| \partial^k_x \partial_t V(t, x)\|_{L^\infty_{t,x}} \le M_k.
		\end{equation}
		\item There exists some $\varepsilon > 0$ so that
		\begin{equation}
		\label{e:V_h2}
		| \langle x \rangle^{1+\varepsilon} \partial^3_x V| + | \langle x
		\rangle^{1+\varepsilon}\partial^3_x \partial_t V| \in L^\infty_{t,x}.
		\end{equation}
		By the fundamental theorem of calculus, this implies that the second
		derivative $\partial^2_x V(t, x)$ converges as $x \to \pm
		\infty$.
		Here and in the sequel, we write
		$\langle x \rangle := (1 + |x|^2)^{1/2}$.
	\end{itemize}
	Note that the potentials $V = 0$ and $V = \tfr{1}{2}x^2$ both fall into 	this class. 
	
	The first set of conditions on the space derivatives of $V$ are quite 
	natural 
	in view of classical Fourier integral operator constructions, from which 
	one 
	can deduce dispersive and Strichartz estimates; see 
	Theorem~\ref{t:propagator}. We also need some time regularity of solutions 
	for our spacetime orthogonality arguments.
	However, the decay hypothesis on the third derivative $\partial^3_x V$ is 
	technical; see 
	the 
	discussion surrounding Lemma~\ref{l:technical_lma_2} 
	below. 
	
	The propagator $U(t, s)$ for such Hamiltonians is known
	to obey Strichartz estimates at least locally in time:
	\begin{equation}
	\label{e:loc_str}
	\| U(t, s) f\|_{L^6_{t,x} (I \times \mf{R})} \lesssim_{I} 
	\|f\|_{L^2(\mf{R})}
	\end{equation}
	for any compact interval $I$ and any fixed $s \in \mf{R}$; see 
	Corollary~\ref{c:strichartz}. Note that $U(t, s) = 
	e^{-i(t-s)H}$ is a one-parameter group if one assumes that $V = V(x)$ is 
	time-independent, but our methods do not require this assumption.
	
	Our main result asserts that if 
	the 
	left side is nontrivial relative to the right side, then the evolution of initial data 
	must contain a ``bubble" of concentration. Such
	concentration will be detected by probing the solution with suitably 
	scaled, 
	translated, and
	modulated test functions.
	
	For $\lambda > 0$ and
	$(x_0, \xi_0) \in T^* \mf{R} \cong \mf{R}_x \times \mf{R}_\xi$,
	define the scaling and phase space translation operators
	\[
	S_\lambda f (x) = \lambda^{-1/2} f(\lambda^{-1} x) \quad\text{and}\quad \pi(x_0, \xi_0) f 
	(x) 
	= e^{i(x-x_0)\xi_0} f(x - x_0).
	\]
	Let $\psi$ denote a real even Schwartz function with $\|\psi\|_2 = (2\pi)^{-1/2}$. 
	Its 
	phase space translate
	$\pi(x_0, \xi_0) \psi$ is localized in space near $x_0$ and in
	frequency near $\xi_0$.

	\begin{thm}
		\label{t:inv_str}
		There exists $\beta > 0$ such that if
		$0 < \varepsilon \le \| U(t, 0) f\|_{L^6([-\fr{1}{2}, \fr{1}{2}]
			\times \mf{R})}$ and $\| f\|_{L^2} \le A$, then
		\[
		\sup_{z \in T^* \mf{R}, \ 0 < \lambda \le 1, \ |t| \le 1/2} |
		\langle \pi(z) S_\lambda \psi, U(t, 0) f \rangle_{L^2(\mf{R})}| \ge C
		\varepsilon (\tfr{\varepsilon}{A} )^{\beta}
		\]
		for some constant $C$ depending on the seminorms in \eqref{e:V_h1} 
		and~\eqref{e:V_h2}.
	\end{thm}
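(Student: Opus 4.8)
The plan is to reduce the variable-coefficient problem to the constant-coefficient inverse Strichartz theorem by a parametrix/approximation argument, exploiting the fact that on the unit time scale the propagator $U(t,0)$ for a subquadratic Hamiltonian is close, in a suitable sense, to a Fourier integral operator whose canonical transformation is the Hamiltonian flow of the symbol $\tfrac12\xi^2 + V$. First I would record from Theorem~\ref{t:propagator} the dispersive and Strichartz bounds on $[-\tfrac12,\tfrac12]$, together with a phase-space ($FBI$-transform or wave-packet) description of $U(t,0)$: decomposing $f$ into coherent states $\pi(z)S_\lambda\psi$, each wave packet is transported by $U(t,0)$ to another wave packet (up to a controlled error) whose center follows the classical trajectory and whose shape is governed by the linearized flow. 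The subquadratic hypotheses \eqref{e:V_h1}--\eqref{e:V_h2} are exactly what make this flow well-behaved: they give uniform $C^\infty$ control of the classical Hamiltonian flow and ensure the second derivative $\partial_x^2 V$ has limits at $\pm\infty$, so the linearized flow does not distort wave packets too badly.

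The core argument is contrapositive. Suppose the supremum on the left side is small, say $\le\delta$; I want to conclude $\|U(t,0)f\|_{L^6}$ is correspondingly small. The idea is a square-function / almost-orthogonality estimate: using the phase-space decomposition, write $U(t,0)f$ as a superposition of evolved wave packets indexed by $(z,\lambda)$, and bound its $L^6$ norm by an interpolation between an $L^2$ bound (from mass conservation, controlled by $A$) and an $L^\infty$-type or restricted-strong-type bound that is governed by the size of the individual pairings $|\langle \pi(z)S_\lambda\psi, U(t,0)f\rangle|\le\delta$. Concretely, I expect an inequality of the shape $\|U(t,0)f\|_{L^6}\lesssim \delta^{\theta} A^{1-\theta}$ for some $\theta\in(0,1)$, possibly after first peeling off a bilinear or $X^{s,b}$-type refinement; rearranging gives the stated lower bound with $\beta = \theta/(1-\theta)$ (or similar). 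An alternative, which may be cleaner, is to transfer the known constant-coefficient inverse Strichartz theorem through the parametrix: near-saturation of \eqref{e:loc_str} forces a bubble for the comparison free evolution, and conjugating that bubble back by the FIO produces a wave packet $\pi(z)S_\lambda\psi$ that pairs nontrivially with $U(t,0)f$.

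The main obstacle will be controlling the parametrix error uniformly over \emph{all} scales $0<\lambda\le1$ and all phase-space centers $z\in T^*\mathbb{R}$ simultaneously, since concentration can occur at any frequency and the Galilean-type degeneracy (Lemma~\ref{l:galilei}) means one cannot localize in frequency a priori. For very fine scales $\lambda\to0$ the wave packet is highly concentrated and short-time, so one should be able to freeze the potential and approximate $U(t,0)$ by a rescaled free evolution modulated by the local linearization of $V$ — this is where \eqref{e:V_h1} (boundedness of $\partial_x^k V$ for $k\ge2$) is used — while for $\lambda\sim1$ one needs the global FIO description and the decay hypothesis \eqref{e:V_h2} to keep the phase function and amplitude under control at spatial infinity. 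Reconciling these regimes, and in particular summing the errors against the $L^6$ norm without losing the gain in $\varepsilon/A$, is the delicate step; I expect it to require a careful dyadic-in-$\lambda$ decomposition and an interpolation that trades the (tiny) individual-pairing bound against the (finite) $L^2$ and local Strichartz bounds.
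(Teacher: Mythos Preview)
Your proposal does not match the paper's argument, and the central step you flag as ``the main obstacle'' is in fact the genuine gap. The paper's proof proceeds in a different order precisely to avoid it: rather than attempt a multi-scale wavepacket estimate directly at the $L^6$ level, the paper first isolates a single length scale by a purely physical-space device (an inverse Hardy--Littlewood--Sobolev inequality applied to the $TT^*$ representation of the Strichartz norm). This yields a time interval $J=[t_0-\lambda^2,t_0+\lambda^2]$ on which the solution is large in a \emph{non}-scale-invariant norm $L^{q_0-1}_t L^{r_0}_x$ with $q_0-1=r_0\in(5,6)$; rescaling to $\lambda=1$ and interpolating against Strichartz then reduces everything to a refined estimate $\|U(t,0)f\|_{L^4_{t,x}}\lesssim \|f\|_2^{1-\beta}\sup_z|\langle\psi_z,f\rangle|^\beta$ at \emph{unit} scale. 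Only after the scale has been fixed does the wavepacket decomposition enter, and then the kernel $K(z_1,z_2,z_3,z_4)$ of the quartic form is bounded on $L^2$ by stationary-phase and bicharacteristic estimates.

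Your contrapositive plan, by contrast, asks for an inequality of the type $\|U(t,0)f\|_{L^6}\lesssim \delta^\theta A^{1-\theta}$ with $\delta=\sup_{z,\lambda,t}|\langle\pi(z)S_\lambda\psi,U(t,0)f\rangle|$, which is a genuinely multi-scale refinement. The paper explicitly notes (see the discussion after the statement of Theorem~\ref{t:inv_str_free_particle}) that the wavepacket transform forces a choice of scale up front and that there is no intrinsic scale in the $L^6$ inequality; your ``dyadic-in-$\lambda$ summation'' is exactly the difficulty, not the solution, and you have not supplied any orthogonality mechanism across scales. Your alternative route---conjugate the known free-particle inverse Strichartz through the FIO parametrix---is also the approach the paper sets aside: the constant-coefficient theorems rest on Fourier restriction estimates, and transporting those through a variable-coefficient FIO with no uniform frequency localization (because of the Galilean degeneracy) is not a perturbative step. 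In short, neither branch of your proposal supplies the missing idea, which in the paper is the inverse-HLS extraction of the time scale before any phase-space analysis begins.
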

	
	By repeatedly applying the following corollary, one can obtain a linear 
	profile decomposition. For simplicity, we state it assuming the
	potential is time-independent (so that $U(t, 0) = e^{-itH}$).
	\begin{cor}
		\label{c:lpd_profile_extraction}
		Let $\{ f_n\} \subset L^2(\mf{R})$ be a sequence such that
		$0 < \varepsilon \le \| e^{-itH} f_n\|_{L^6_{t,x} ([-\fr{1}{2},
			\fr{1}{2}] \times \mf{R})}$ and $\|f\|_{L^2} \le A$
		for some constants $A, \varepsilon > 0$. Then, after passing to a
		subsequence, there exist a sequence of parameters
		\[\{(\lambda_n, t_n, z_n)\}_n \subset (0, 1] \times [-1/2, 1/2]
		\times T^* \mf{R}\] and a function $0 \ne \phi \in L^2$ such that,
		
		\begin{gather}
		S_{\lambda_n}^{-1} \pi(z_n)^{-1}  e^{-it_n H} f_n \rightharpoonup \phi
		\text{ in } L^2 \nonumber \\
		\| \phi\|_{L^2} \gtrsim \varepsilon (\tfr{\varepsilon}{ A} )^{\beta}.
		\label{e:profile_nonzero}
		\end{gather}
		Further,
		\begin{gather}
		\| f_n\|_2^2 - \| f_n - e^{it_n H} \pi(z_n) S_{\lambda_n} \phi
		\|_2^2 - \| e^{it_n H} \pi(z_n) S_{\lambda_n} \phi\|_2^2 \to 0. 
		\label{e:L2_decoupling}
		\end{gather}
	\end{cor}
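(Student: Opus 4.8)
The plan is to derive the corollary from Theorem~\ref{t:inv_str} by a standard weak-compactness extraction; in the time-independent case $U(t,0)=e^{-itH}$, so the theorem applies as stated. First I would apply Theorem~\ref{t:inv_str} to each $f_n$: since $\|f_n\|_{L^2}\le A$ and $\|e^{-itH}f_n\|_{L^6_{t,x}([-1/2,1/2]\times\mf{R})}\ge\varepsilon$, it supplies for every $n$ a triple $(\lambda_n,t_n,z_n)\in(0,1]\times[-1/2,1/2]\times T^*\mf{R}$ with
\[
\bigl|\langle \pi(z_n)S_{\lambda_n}\psi,\; e^{-it_nH}f_n\rangle_{L^2}\bigr|\ \ge\ \tfr{1}{2}\,C\,\varepsilon\bigl(\tfr{\varepsilon}{A}\bigr)^{\beta};
\]
it suffices that the supremum in Theorem~\ref{t:inv_str} be attained up to a factor of two, so no measurability issue arises. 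Put $g_n:=S_{\lambda_n}^{-1}\pi(z_n)^{-1}e^{-it_nH}f_n$. Since $S_\lambda$, $\pi(z)$, and $e^{-itH}$ are unitary on $L^2$, we have $\|g_n\|_{L^2}=\|f_n\|_{L^2}\le A$ and, after transferring the operators across the inner product through their adjoints, $\langle\pi(z_n)S_{\lambda_n}\psi,\,e^{-it_nH}f_n\rangle_{L^2}=\langle\psi,g_n\rangle_{L^2}$. By weak compactness of bounded sets in $L^2$, passing to a subsequence gives $g_n\rightharpoonup\phi$ for some $\phi\in L^2$ with $\|\phi\|_{L^2}\le A$; this is the claimed weak limit.

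Next I would establish $\phi\ne0$ and the quantitative bound~\eqref{e:profile_nonzero}. Because $\psi\in L^2$, weak convergence yields $\langle\psi,g_n\rangle_{L^2}\to\langle\psi,\phi\rangle_{L^2}$, hence $|\langle\psi,\phi\rangle_{L^2}|\ge\tfr{1}{2}C\varepsilon(\tfr{\varepsilon}{A})^{\beta}$; Cauchy--Schwarz together with $\|\psi\|_2=(2\pi)^{-1/2}$ then gives $\|\phi\|_{L^2}\gtrsim\varepsilon(\tfr{\varepsilon}{A})^{\beta}$, and in particular $\phi\ne0$.

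Finally, for~\eqref{e:L2_decoupling} I set $h_n:=e^{it_nH}\pi(z_n)S_{\lambda_n}\phi$, which is $(S_{\lambda_n}^{-1}\pi(z_n)^{-1}e^{-it_nH})^{-1}\phi$, so that $\|h_n\|_{L^2}=\|\phi\|_{L^2}$ and, by the same adjoint manipulation, $\langle f_n,h_n\rangle_{L^2}=\langle g_n,\phi\rangle_{L^2}$. Expanding $\|f_n-h_n\|_2^2$ gives
\[
\|f_n\|_2^2-\|f_n-h_n\|_2^2-\|h_n\|_2^2=2\,\mr{Re}\,\langle f_n,h_n\rangle_{L^2}-2\|h_n\|_2^2=2\,\mr{Re}\,\langle g_n,\phi\rangle_{L^2}-2\|\phi\|_2^2,
\]
and the right-hand side tends to $2\|\phi\|_2^2-2\|\phi\|_2^2=0$ since $g_n\rightharpoonup\phi$. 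This would finish the argument. I anticipate no real obstacle here: all the difficulty is concentrated in Theorem~\ref{t:inv_str}, and the remaining work is careful bookkeeping with the unitary operators $S_\lambda$, $\pi(z)$, $e^{-itH}$ and their inverses. The only point worth flagging is that the proof never requires $\lambda_n$, $t_n$, or $z_n$ to converge---which is crucial, since $z_n$ may run off to infinity in phase space and $\lambda_n$ may degenerate to $0$.
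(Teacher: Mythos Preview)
Your proposal is correct and follows essentially the same route as the paper: apply Theorem~\ref{t:inv_str} to each $f_n$ to obtain the parameters, pass to a weak limit of the unitarily normalized sequence $g_n=S_{\lambda_n}^{-1}\pi(z_n)^{-1}e^{-it_nH}f_n$, test against $\psi$ for the lower bound, and expand the square for the decoupling. The only differences are cosmetic---you spell out the adjoint bookkeeping and the factor-of-two choice more explicitly than the paper does.
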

	
	\begin{proof}
		By Theorem~\ref{t:inv_str}, there exist $(\lambda_n, t_n, z_n)$ such
		that
		$| \langle \pi(z_n) S_{\lambda_n} \psi, e^{-it_n H}f_n \rangle|
		\gtrsim \varepsilon (\tfr{\varepsilon}{ A} )^{\beta}$.
		As the sequence $S_{\lambda_n}^{-1} \pi(z_n)^{-1} e^{-it_n H}f_n$ is
		bounded in $L^2$, it has a weak subsequential limit $\phi \in
		L^2$. Passing to this subsequence, we have
		\[
		\| \phi\|_2 \gtrsim | \langle \psi, \phi \rangle| = \lim_{n \to \infty} |
		\langle \psi, S_{\lambda_n}^{-1} \pi(z_n)^{-1} e^{-it_n H} f_n
		\rangle| \gtrsim \varepsilon (\tfr{\varepsilon}{A})^{\beta}.
		\]
		To obtain~\eqref{e:L2_decoupling}, write the left side as
		\[
		\begin{split}
		2 \opn{Re} \langle f_n - e^{it_n H} \pi(z_n) S_{\lambda_n} \phi,
		e^{it_n H} \pi(z_n) S_{\lambda_n} \phi \rangle = 2 \opn{Re} \langle
		S_{\lambda_n}^{-1} \pi(z_n)^{-1} e^{-it_n H} f_n - \phi, \phi
		\rangle \to 0,
		\end{split}
		\]
		by the definition of $\phi$.
	\end{proof}
	
	The restriction to a compact time interval in the above statements is
	dictated by the generality of our hypotheses. For a generic
	subquadratic potential, the $L^6_{t,x}$ norm of a solution need not be
	finite on $\mf{R}_t \times \mf{R}_x$. For example, solutions to the 
	harmonic oscillator (for which $V(x) = \frac12 x^2$) are periodic in time. However, the conclusions may be
	strengthened in some cases. In particular, our methods
	specialize to the case $V = 0$ to yield
	\begin{thm}
		\label{t:inv_str_free_particle}
		If $0 < \varepsilon \le \|e^{\fr{it\Delta}{2}} f\|_{L^6_{t,x}(\mf{R}
			\times \mf{R})} \lesssim \|f\|_{L^2} = A$, then
		\[
		\sup_{z \in T^* \mf{R}, \ \lambda > 0, \ t \in \mf{R}} | \langle \pi(z) 
		S_\lambda \psi, 
		e^{\fr{it \Delta}{2}} f \rangle| \gtrsim \varepsilon 
		(\tfr{\varepsilon}{A})^{\beta}.
		\]
	\end{thm}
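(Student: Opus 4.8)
The plan is to reduce this global statement to the local-in-time Theorem~\ref{t:inv_str}, specialized to $V\equiv 0$ --- for which all the seminorms appearing in \eqref{e:V_h1}--\eqref{e:V_h2} vanish, so that the constant $C$ and exponent $\beta$ produced there are absolute --- by exploiting the exact parabolic scaling symmetry of the free equation. First I would locate a compact time slab carrying a definite fraction of the Strichartz norm: since $\|e^{it\Delta/2}f\|_{L^6_{t,x}(\mf{R}\times\mf{R})}<\infty$, the monotone convergence theorem supplies a finite $R>0$ with $\|e^{it\Delta/2}f\|_{L^6_{t,x}([-R,R]\times\mf{R})}\ge\tfr12\varepsilon$. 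I would then rescale this slab down to $[-\tfr12,\tfr12]$: setting $\lambda:=(2R)^{-1/2}$ and $\tilde f:=S_\lambda f$, the identity $e^{it\Delta/2}S_\lambda = S_\lambda e^{i(t/\lambda^2)\Delta/2}$ together with the invariance of the $L^6_{t,x}$ norm under $u(t,x)\mapsto\lambda^{-1/2}u(\lambda^{-2}t,\lambda^{-1}x)$ (this rescaling is exactly the mass-critical one in dimension one) gives $\|\tilde f\|_{L^2}=\|f\|_{L^2}=A$ and $\|e^{it\Delta/2}\tilde f\|_{L^6_{t,x}([-\fr12,\fr12]\times\mf{R})}=\|e^{it\Delta/2}f\|_{L^6_{t,x}([-R,R]\times\mf{R})}\ge\tfr12\varepsilon$.

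Applying Theorem~\ref{t:inv_str} to $\tilde f$ (with $V\equiv 0$, so that $U(t,0)=e^{it\Delta/2}$, and with $\varepsilon$ replaced by $\tfr12\varepsilon$) then yields parameters $(z_0,\lambda_0,t_0)$ with $0<\lambda_0\le1$, $|t_0|\le\tfr12$, and $|\langle\pi(z_0)S_{\lambda_0}\psi,\,e^{it_0\Delta/2}\tilde f\rangle|\gtrsim\varepsilon(\tfr{\varepsilon}{A})^\beta$. To transfer this back to $f$, I would write $e^{it_0\Delta/2}\tilde f=S_\lambda e^{i(t_0/\lambda^2)\Delta/2}f$, move the unitary $S_\lambda$ onto the test function, and use the composition laws $S_{1/\lambda}S_{\lambda_0}=S_{\lambda_0/\lambda}$ and $S_{1/\lambda}\pi(x_0,\xi_0)=\pi(\lambda^{-1}x_0,\lambda\xi_0)S_{1/\lambda}$ to rewrite $\langle\pi(z_0)S_{\lambda_0}\psi,\,e^{it_0\Delta/2}\tilde f\rangle=\langle\pi(z_1)S_{\lambda_0/\lambda}\psi,\,e^{it_1\Delta/2}f\rangle$, where $z_1=(\lambda^{-1}x_0,\lambda\xi_0)$ for $z_0=(x_0,\xi_0)$ and $t_1=t_0/\lambda^2$. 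Since $\lambda_0/\lambda>0$, $t_1\in\mf{R}$, and $z_1\in T^*\mf{R}$, this term is one of those competing in the supremum in the statement, and the desired bound follows (absorbing the factors of $2$ into the implicit constant).

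I do not expect a genuine analytic obstacle: the substantive content is entirely in Theorem~\ref{t:inv_str}, and the rest is bookkeeping within the symmetry group. The two points needing care are that the parabolic rescaling is \emph{exactly} mass-critical in $d=1$ --- so that it preserves both $\|\cdot\|_{L^2}$ and $\|\cdot\|_{L^6_{t,x}}$ and carries the family $\{\pi(z)S_\lambda\psi\}$ to itself with transformed parameters --- and that the enlargement of the parameter ranges in the conclusion, from $\lambda_0\le1$, $|t_0|\le\tfr12$ to $\lambda>0$, $t\in\mf{R}$, is unavoidable precisely because the rescaling factor $(2R)^{-1/2}$ can be arbitrarily small.
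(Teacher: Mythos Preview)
Your argument is correct, and it takes a genuinely different route from the paper's. The paper does not reduce Theorem~\ref{t:inv_str_free_particle} to Theorem~\ref{t:inv_str}; rather, it remarks (see the discussion following~\eqref{e:difference_of_trajectories}) that when $\partial_x^2 V\equiv 0$ the restriction $|t-s|\le 1$ underlying all the bicharacteristic and wavepacket estimates may be dropped, so that the entire machinery of Sections~\ref{s:inv_hls}--\ref{s:L4} runs on $\mf{R}_t$ directly and yields the global statement with unrestricted $\lambda>0$ and $t\in\mf{R}$. Your approach instead treats Theorem~\ref{t:inv_str} as a black box and invokes only the exact parabolic scaling symmetry of the free propagator: you localize to a slab $[-R,R]$, rescale it to $[-\tfr12,\tfr12]$, apply the local theorem, and unwind. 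This is cleaner and more modular --- it requires no re-examination of the proof --- while the paper's route has the advantage of explaining \emph{why} the parameter ranges enlarge (the time cutoff in the proof was only ever needed to control terms involving $\partial_x^2 V$). Both arguments ultimately rely on the same special feature of $V=0$: yours exploits it as an exact symmetry of the equation, the paper's as the vanishing of all error terms in the trajectory estimates.
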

	This yields an analogue to Corollary~\ref{c:lpd_profile_extraction},
	which can be used to derive a linear profile decomposition for the one dimensional
	free particle. Such a
	profile decomposition was obtained originally by
	Carles and Keraani~\cite{carles-keraani} using different methods.

	\subsection{Ideas of proof}
	We shall assume in the sequel
	that the initial data $f$ is Schwartz. This assumption will justify
	certain applications of Fubini's theorem and may be removed a
	posteriori by an approximation argument. Further, we prove the theorem with 
	the 
	time interval
	$[-\tfr{1}{2}, \tfr{1}{2}]$ replaced by $[-\delta_0, \delta_0]$, where
	$\delta_0$ is furnished by Theorem~\ref{t:propagator} 
	according to the seminorms $M_k$ of the potential. Indeed, the
	interval $[-\tfr{1}{2}, \tfr{1}{2}]$ can then be tiled by subintervals
	of length $\delta_0$.
		
	Given these preliminary reductions, we describe the
	main ideas of the proof of Theorem~\ref{t:inv_str}. Our goal is to locate
	the parameters describing a concentration bubble in the evolution of the initial
	data. The relevant parameters are a length scale
	$\lambda_0$, spatial center $x_0$, frequency center $\xi_0$, and a time 
	$t_0$ describing when the concentration occurs. Each parameter is 
	associated with a noncompact symmetry or
	approximate symmetry of the Strichartz inequality. For instance, when
	$V = 0$ or $V = \tfr{1}{2}x^2$, both sides of~\eqref{e:loc_str} are
	preserved by translations $f \mapsto f(\cdot - x_0)$ and modulations
	$f \mapsto e^{i(\cdot) \xi_0} f$ of the initial data, while more general 
	$V$ admit an approximate Galilean invariance; see
	Lemma~\ref{l:galilei} below.

	The existing approaches to inverse Strichartz inequalities for the free
	particle can be roughly summarized as follows. First, one uses
	Fourier analysis to isolate a scale $\lambda_0$ and frequency
	center $\xi_0$.  For example, Carles-Keraani prove in their
	Proposition 2.1 that for some $1 < p < 2$,
	\[
	\| e^{it \partial_x^2 } f\|_{L^6_{t,x}(\mf{R} \times \mf{R})}
	\lesssim_p \Bigl( \sup_{J} |J|^{\fr{1}{2} - \fr{1}{p}} \|
	\hat{f}\|_{L^p (J)} \Bigr)^{1/3} \|f\|_{L^2(\mf{R})},
	\]
	where $J$ ranges over all intervals and $\hat{f}$ is the Fourier
	transform of $f$. Then one uses a separate argument to determine $x_0$
	and $t_0$. This strategy ultimately relies on the fact that the
	propagator for the free particle is diagonalized by the Fourier
	transform. 
	
	One does not enjoy that luxury with general Schr\"{o}dinger operators as the
	momenta of particles may vary with time and in a position-dependent
	manner. Thus it is natural to consider the position and frequency
	parameters together in phase space. To this end, we use a 
	wavepacket 
	decomposition as
	a partial substitute for the Fourier transform.	
	Unlike the Fourier transform, however, the wavepacket transform
	requires that one first chooses a length scale. This is not entirely trivial
	because the Strichartz inequality~\eqref{e:loc_str} which
	we are trying to invert has no intrinsic length scale; the
	rescaling
	\[f \mapsto \lambda^{-d/2} f(\lambda^{-1} \cdot), \ 0 < \lambda \ll 1
	\]
	preserves both sides of the inequality exactly when $V = 0$ and
	at least approximately for subquadratic potentials $V$.
		
	We obtain the parameters in a different order. Using
	a direct physical space argument, we show that if $u(t,x)$ is a
	solution with nontrivial $L^6_{t,x}$ norm, then there exists a time
	interval $J$ such that $u$ is large in $L^q_{t,x}(J \times \mf{R})$
	for some $q < 6$. Unlike the $L^6_{t,x}$ norm, the $L^q_{t,x}$ is not 
	scale-invariant, hence the interval $J$ identifies a significant time $t_0$ 
	and physical scale $\lambda_0 = \sqrt{|J|}$. By an interpolation and 
	rescaling argument, we then
	reduce matters to a refined $L^2_x \to L^4_{t, x}$ estimate. This is
	then proved using a wavepacket decomposition, integration by parts,
	and analysis of bicharacteristics, revealing the parameters
	$x_0$ and~$\xi_0$ simultaneously.

	This paper is structured as follows. Section~\ref{s:prelim} collects
	some preliminary definitions and lemmas. The
	heart of the argument is presented in Sections~\ref{s:inv_hls}
	and~\ref{s:L4}.  
	As the identification of a time interval works in any
	number of space dimensions, Section~\ref{s:inv_hls} is written for a 
	general subquadratic
	Schr\"{o}dinger operator on $\mf{R}^d$. In fact the argument there applies 
	to any linear propagator that satisfies the dispersive estimate. In the 
	later sections we specialize to $d = 1$. 
	
	Further insights seem to be needed in two or more space dimensions. A 
	naive attempt to extend our methods to higher dimensions would require us to prove a refined $L^p$
	estimate for some $2 < p < 4$; our arguments in this paper exploit
	the fact that $4$ is an even integer. There is also a more conceptual 
	barrier: while a timescale should serve as a proxy for \emph{one} spatial 
	scale, there may \emph{a priori} exist more than one interesting physical 
	scale in higher 
	dimensions. For instance, the nonelliptic Schr\"{o}dinger equation 
	\begin{align*}
	i\partial_t u = -\partial_x \partial_y u
	\end{align*}
	in two dimensions satisfies the Strichartz estimate~\eqref{e:intro_str} and 
	admits the scaling symmetry $u \mapsto u(t, \lambda x, 
	\lambda^{-1} y)$ in addition to the usual one. A refinement of the 
	Strichartz inequality for 
	this particular example was obtained using Fourier-analytic methods by 
	Rogers and Vargas~\cite{Rogers2006}.  Any higher-dimensional 
	generalization of our methods must somehow distinguish the elliptic and 
	nonelliptic cases.
	
	\subsection*{Acknowledgements} 
	
	This work
	was partially supported by NSF grants DMS-1265868 (PI R. Killip),
	DMS-1161396, and DMS-1500707 (both PI M. Visan).

	\section{Preliminaries}
	\label{s:prelim}

\subsection{Phase space transforms}
	We briefly recall the (continuous) wavepacket decomposition; see for
	instance~\cite{folland}. Fix a real, even Schwartz function
	$\psi \in \mcal{S}(\mf{R}^d)$ with $\| \psi\|_{L^2} = (2\pi)^{-d/2}$.  For
	a function $f \in L^2(\mf{R}^d)$ and a point
	$z = (x, \xi) \in T^* \mf{R}^d = \mf{R}^d_x \times \mf{R}^d_{\xi}$ in phase 
	space,
	define
	\[
	T f(z) = \int_{\mf{R}^d} e^{i(x-y)\xi} \psi(x-y) f(y) \, dy = \langle f, 
	\psi_z \rangle_{L^2(\mf{R}^d)}.
	\]
	By taking the Fourier transform in the $x$ variable, we get
	\[
	\mcal{F}_x Tf (\eta, \xi) = \int_{\mf{R}^d} e^{-iy\eta}
	\hat{\psi}(\eta - \xi) f(y) \, dy = \hat{\psi}(\eta - \xi) \hat{f}(\eta).
	\]
	Thus $T$ maps
	$\mcal{S}(\mf{R}^d) \to \mcal{S}(\mf{R}^d \times \mf{R}^d)$ and is an
	isometry $L^2 (\mf{R}^d) \to L^2( T^* \mf{R}^d)$. The hypothesis that
	$\psi$ is even implies the adjoint formula
	\[
	T^* F(y) = \int_{T^* \mf{R}^d} F(z) \psi_z(y) \, dz
	\]
	and the inversion formula
	\[
	f = T^*T f = \int_{T^* \mf{R}^d} \langle f, \psi_z
	\rangle_{L^2(\mf{R}^d)} \psi_z \, dz.
	\]

\subsection{Estimates for bicharacteristics}
Let $V(t,x)$ satisfy $\partial_x^k V(t, \cdot) \in L^\infty(\mf{R}^d)$
for all $k \ge 2$, uniformly in $t$. The
time-dependent symbol $h(t, x, \xi) = \tfr{1}{2}|\xi|^2 + V(t,x)$ defines a 
globally Lipschitz
Hamiltonian vector field
$\xi \partial_x - (\partial_x V) \partial_\xi$ on $T^* \mf{R}^d$, hence the 
flow map $\Phi(t, s) : T^* \mf{R}^d \to T^* \mf{R}^d$ is well-defined for all 
$s$ and $t$.
For $z = (x, \xi)$, write
$z^t = ( x^t(z), \xi^t(z) ) = \Phi(t, 0)(z)$ denote the bicharacteristic 
starting from $z$ at time $0$.

Fix
$z_0, z_1 \in T^* \mf{R}^d$. We obtain by integration
\[
\begin{split}
x_0^t - x_1^t &= x_0^s - x_1^s + (t-s) (\xi_0^s - \xi_1^s) - \int_s^t
(t - \tau) ( \partial_x V(\tau, x_0^\tau) - \partial_x V(\tau,
x_1^\tau)) \, d\tau\\
\xi_0^t - \xi_1^t &= \xi_0^s - \xi_1^s - \int_s^t (\partial_x V(\tau,
x_0^\tau) - \partial_x V (\tau, x_1^\tau) ) \, d \tau.
\end{split}
\]
As
$|\partial_x V (\tau, x_0^\tau) - \partial_x V (\tau, x_1^\tau)| \le
\| \partial^2_x V \|_{L^\infty} |x_0^\tau - x_1^\tau|$,
we have for $|t-s| \le 1$
\begin{equation}
\label{e:difference_of_trajectories}
	\begin{split}
&|x_0^t - x_1^t| \le ( |x_0^s - x_1^s| + |t-s| |\xi_0^s -
\xi_1^s|) e^{ \| \partial_x^2 V\|_{L^\infty} },\\
&| \xi_0^t - \xi_1^t - (\xi_0^s - \xi_1^s)| \le ( |t-s| |x_0^s -
x_1^s| + |t-s|^2 |\xi_0^s - \xi_1^s|) \|\partial^2_x V\|_{L^\infty} 
e^{ \| \partial_x^2 V\|_{L^\infty} } ,\\
&|x_0^t - x_1^t - (x_0^s - x_1^s) - (t-s) (\xi_0^s - \xi_1^s)|
\le (|t-s|^2 |x_0^s - x_1^s| + |t-s|^3 |\xi_0^s - \xi_1^s|) e^{\| 
	\partial^2_x V \|_{L^\infty}}.
	\end{split}
\end{equation}
In the sequel, we shall always assume that $|t-s| \le 1$, and all implicit 
constants shall depend on $\partial^2_x V$ or finitely many 
higher derivatives. We 
also
remark that this time restriction may be dropped if $\partial^2_x V \equiv 
0$ (as in Theorem~\ref{t:inv_str_free_particle}). 
The preceding
computations immediately yield the following dynamical
consequences:
\begin{lma}
	\label{l:collisions}
	Assume the preceding setup.
	\begin{itemize}
		\item There exists $\delta>0$, depending on $\|\partial_x^2 V 
		\|_{L^\infty}$, 
		such that 
		$|t-s| \le \delta$ implies
		\[
		|x^t_0 - x_1^t - (x_0^s - x_1^s) - (t-s) (\xi_0^s - \xi_1^s)| \le
		\fr{1}{100} ( |x_0^s - x_1^s| + |t-s| |\xi_0^s - \xi_1^s| ).
		\]
		Hence if $|x_0^s - x_1^s | \le r$ and $C
		\ge 2$, then
		$|x_0^t - x_1^t| \ge Cr$ for
		$\tfr{2Cr}{|\xi_0^s - \xi_1^s|} \le |t-s| \le \delta$. Informally,
		two particles colliding with sufficiently large relative velocity 
		will
		interact only once during a length $\delta$ time interval.
		
		\item With $\delta$ and $C$ as above, if $|x_0^s - x_1^s| \le r$, then
		\[
		| \xi_0^t - \xi_1^t - (\xi_0^s - \xi_1^s)| \le \min \Bigl( \delta,
		\fr{2Cr}{|\xi_0^s - \xi_1^s|} \Bigr) Cr \|\partial^2_x
		V\|_{L^\infty} e^{\|\partial^2_x V\|_{L^\infty}}
		\]
		for all $t$ such that $|t-s|\leq \delta$ and $|x_0^{\tau} - x_1^{\tau} | \le Cr$ for all $s\leq \tau\leq t$. That is, the
		relative velocity of two particles remains essentially constant
		during an interaction.
	\end{itemize}
\end{lma}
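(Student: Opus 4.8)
The plan is to read everything off the quantitative bounds recorded in \eqref{e:difference_of_trajectories} and the integral formulas preceding them; only one short dynamical observation, needed for the second bullet, goes beyond bookkeeping, and that is the step I expect to require a moment's care.

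First I would pin down $\delta$: taking $\delta := \min\{1,\ \tfrac{1}{10}e^{-\|\partial_x^2 V\|_{L^\infty}/2}\}$ ensures both $\delta \le 1$ (so that \eqref{e:difference_of_trajectories} applies) and $\delta^2 e^{\|\partial_x^2 V\|_{L^\infty}} \le \tfrac{1}{100}$. Then the third line of \eqref{e:difference_of_trajectories}, after bounding $|t-s|^2 \le \delta^2$ in its first term and rewriting $|t-s|^3 = |t-s|^2\,|t-s| \le \delta^2 |t-s|$ in its second, gives precisely
\[
|x_0^t - x_1^t - (x_0^s - x_1^s) - (t-s)(\xi_0^s - \xi_1^s)| \le \delta^2 e^{\|\partial_x^2 V\|_{L^\infty}}\big(|x_0^s - x_1^s| + |t-s|\,|\xi_0^s - \xi_1^s|\big) \le \tfrac{1}{100}\big(|x_0^s - x_1^s| + |t-s|\,|\xi_0^s - \xi_1^s|\big),
\]
the inequality displayed in the first bullet. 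For the ensuing separation conclusion $|x_0^t - x_1^t| \ge Cr$, I would set $a := |x_0^s - x_1^s| \le r$ and $b := |t-s|\,|\xi_0^s - \xi_1^s|$, observe that the hypothesis on $|t-s|$ forces $b \ge 2Cr$, and apply the reverse triangle inequality twice:
\[
|x_0^t - x_1^t| \ge \big|(x_0^s - x_1^s) + (t-s)(\xi_0^s - \xi_1^s)\big| - \tfrac{1}{100}(a+b) \ge (b - a) - \tfrac{1}{100}(a+b) = \tfrac{99}{100}b - \tfrac{101}{100}a.
\]
For $C \ge 2$ this is at least $\tfrac{99}{50}Cr - \tfrac{101}{100}r \ge \big(C + \tfrac{19}{20}\big)r$, so in fact $|x_0^t - x_1^t| > Cr$ with a definite margin, which I would record since the second bullet uses it.

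For the second bullet I would start from the integral formula for $\xi_0^t - \xi_1^t$. Combined with the Lipschitz bound $|\partial_x V(\tau, x_0^\tau) - \partial_x V(\tau, x_1^\tau)| \le \|\partial_x^2 V\|_{L^\infty}|x_0^\tau - x_1^\tau|$ and the standing hypothesis $|x_0^\tau - x_1^\tau| \le Cr$ on the interval bounded by $s$ and $t$, this gives at once
\[
|\xi_0^t - \xi_1^t - (\xi_0^s - \xi_1^s)| \le |t-s|\,Cr\,\|\partial_x^2 V\|_{L^\infty},
\]
and since $e^{\|\partial_x^2 V\|_{L^\infty}} \ge 1$ it only remains to show $|t-s| \le \min\{\delta,\ 2Cr/|\xi_0^s - \xi_1^s|\}$. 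Now $|t-s| \le \delta$ is a hypothesis; and if we also had $|t-s| > 2Cr/|\xi_0^s - \xi_1^s|$ --- which, given $|t-s| \le \delta$, can only happen when that ratio is itself $< \delta$ --- then $|t-s|$ would lie in exactly the window $[\,2Cr/|\xi_0^s - \xi_1^s|,\ \delta\,]$ to which the first bullet applies, forcing $|x_0^t - x_1^t| > Cr$ and contradicting the confinement hypothesis at the endpoint $\tau = t$. This contradiction --- the quantitative form of the assertion that two colliding particles cannot stay together longer than it takes them to separate --- is the one piece of genuine input; the rest is routine, and the case $t < s$ is handled by the evident reflection in time.
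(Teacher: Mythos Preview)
Your proposal is correct and follows exactly the route the paper intends: the lemma is stated there without proof, as an immediate consequence of the estimates~\eqref{e:difference_of_trajectories} and the integral formulas preceding them, and your argument simply fills in those details in the natural way. The only nontrivial point---that the confinement hypothesis in the second bullet forces $|t-s| \le 2Cr/|\xi_0^s - \xi_1^s|$ via the strict separation established in the first bullet---is handled correctly.
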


The following technical lemma will be used in 
Section~\ref{s:L4_estimate}.
\begin{lma}
	\label{l:technical_lma}
	There exists a constant $C > 0$ so that if
	$Q_\eta = (0, \eta) +[-1, 1]^{2d}$ and $r \ge 1$, then
	\[
	\bigcup_{|t - t_0| \le \min(|\eta|^{-1}, 1) } \Phi(t, 0)^{-1} (z_0^t + 
	rQ_\eta) \subset
	\Phi(t_0, 0)^{-1}  (z_0^{t_0}+ Cr Q_\eta).
	\]
\end{lma}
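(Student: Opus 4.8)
The plan is to reduce the claim to the two-point trajectory estimates in~\eqref{e:difference_of_trajectories}, applied with one ``point'' being $z_0$ and the other being an arbitrary point $w$ of the left-hand set. Fix $t$ with $|t-t_0| \le \min(|\eta|^{-1}, 1)$ and suppose $w \in \Phi(t,0)^{-1}(z_0^t + r Q_\eta)$, i.e. $w^t \in z_0^t + r Q_\eta$. We must show $w^{t_0} \in z_0^{t_0} + Cr Q_\eta$, that is, that $|x(w^{t_0}) - x(z_0^{t_0})| \le Cr$ and $|\xi(w^{t_0}) - \xi(z_0^{t_0}) - \eta| \le Cr$. By hypothesis $|x(w^t) - x(z_0^t)| \le r$ and $|\xi(w^t) - \xi(z_0^t) - \eta| \le r$. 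First I would run the bicharacteristic difference estimates \emph{backward} from time $t$ to time $t_0$, applying the first two lines of~\eqref{e:difference_of_trajectories} with the roles of $s$ and $t$ there played by $t$ and $t_0$ respectively (both lie in an interval of length $\le 1$, so the hypothesis $|t - t_0| \le 1$ is met).

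The key mechanism is that the relevant time increment $|t - t_0|$ is controlled by $|\eta|^{-1}$, while the frequency separation at time $t$ is comparable to $|\eta|$ up to an error of size $O(r)$. Concretely: from the first inequality in~\eqref{e:difference_of_trajectories},
\[
|x(w^{t_0}) - x(z_0^{t_0})| \le \bigl( |x(w^t) - x(z_0^t)| + |t - t_0|\,|\xi(w^t) - \xi(z_0^t)| \bigr) e^{\|\partial_x^2 V\|_{L^\infty}}.
\]
Here $|\xi(w^t) - \xi(z_0^t)| \le |\eta| + r \le |\eta| + r|\eta|$ is not bounded, but it is multiplied by $|t - t_0| \le |\eta|^{-1}$, giving $|t-t_0|\,|\xi(w^t) - \xi(z_0^t)| \lesssim 1 + r \lesssim r$ since $r \ge 1$. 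Hence $|x(w^{t_0}) - x(z_0^{t_0})| \lesssim r$ with an implicit constant depending only on $\|\partial_x^2 V\|_{L^\infty}$. For the frequency component, the second inequality in~\eqref{e:difference_of_trajectories} gives
\[
\bigl| (\xi(w^{t_0}) - \xi(z_0^{t_0})) - (\xi(w^t) - \xi(z_0^t)) \bigr| \le \bigl( |t-t_0|\,|x(w^t) - x(z_0^t)| + |t-t_0|^2 |\xi(w^t) - \xi(z_0^t)| \bigr)\|\partial_x^2 V\|_{L^\infty} e^{\|\partial_x^2 V\|_{L^\infty}},
\]
and again $|t-t_0| \le 1$ and $|t-t_0|^2|\xi(w^t)-\xi(z_0^t)| \le |\eta|^{-1}(|\eta| + r|\eta|) \lesssim r$, so the right side is $\lesssim r$; combining with $|\xi(w^t) - \xi(z_0^t) - \eta| \le r$ yields $|\xi(w^{t_0}) - \xi(z_0^{t_0}) - \eta| \lesssim r$. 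Taking $C$ to be the maximum of the two implicit constants (times a harmless factor absorbing the $[-1,1]^{2d}$ normalization) completes the argument.

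The only mild subtlety — and the step I would be most careful about — is the bookkeeping of which time plays the role of ``$s$'' in~\eqref{e:difference_of_trajectories}: those inequalities are symmetric in the endpoints of the time interval (the estimates bound increments over $[\min(s,t), \max(s,t)]$), so applying them with initial time $t$ and terminal time $t_0$ is legitimate, but one should state this explicitly. Everything else is a direct substitution using $r \ge 1$ and $|t - t_0|\,|\eta| \le 1$; there is no genuine obstacle, and the constant $C$ depends only on $\|\partial_x^2 V\|_{L^\infty}$, consistent with the running conventions of the section.
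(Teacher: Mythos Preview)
Your proposal is correct and follows essentially the same approach as the paper: both arguments apply the two-point difference estimates~\eqref{e:difference_of_trajectories} to the pair $(w,z_0)$ (the paper writes $(z,z_0)$) between the times $t$ and $t_0$, using $|t-t_0|\le \min(|\eta|^{-1},1)$ and $r\ge 1$ to bound $|t-t_0|\,(|\eta|+r)\lesssim r$. The only cosmetic issue is the stray inequality ``$|\eta|+r\le |\eta|+r|\eta|$,'' which is false for $|\eta|<1$ but plays no role in your actual bounds; simply drop it and invoke $|t-t_0|\le\min(|\eta|^{-1},1)$ directly.
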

In other words, if the bicharacteristic $z^t$ starting at $z \in 
T^*\mf{R}^d$ passes
through the cube $z_0^t + rQ_{\eta}$ in phase space during some time
window $|t - t_0| \le \min(|\eta|^{-1},1)$, then it must lie in the
dilate $z_0^{t_0} + CrQ_{\eta}$ at time $t_0$.
\begin{proof}
	If $z^{s} \in z_0^{s} + rQ_\eta$,
	then~\eqref{e:difference_of_trajectories} and
	$|t-s| \le \min(|\eta|^{-1}, 1)$ imply that
	
	\begin{gather*}
	|x^t - x_0^t| \lesssim |x^s - x_0^s| + \min(|\eta|^{-1}, 1) (|\eta| + 
	r) \lesssim r,\\
	|\xi^t - \xi_0^t - (\xi^s - \xi_0^s)| \lesssim r\min(|\eta|^{-1}, 1).
	\end{gather*}	
\end{proof}

\subsection{The Schr\"{o}dinger propagator}
In this section we recall some facts regarding the quantum
propagator for subquadratic potentials. First, we have the following 
oscillatory integral representation:
\begin{thm}[{Fujiwara~\cite{fujiwara_fundamental_solution,fujiwara_path_integrals}}]
	\label{t:propagator}
	Let $V(t, x)$ satisfy
	\[
	M_k := \| \partial^k_x V(t, x) \|_{L^\infty_{t,x}} +\| V(t, 
	x)\|_{L^\infty_t 
		L^\infty_x ( |x| \le 1)}  < \infty
	\]
	for all $k \ge 2$. There exists a constant $\delta_0 > 0$ such that for 
	all
	$0 < |t-s| \le \delta_0$ the propagator $U(t, s)$ for
	$H = -\tfr{1}{2} \Delta + V(t, x)$ has Schwartz kernel
	\[
	U(t, s) (x, y) = \Bigl (\fr{1}{2\pi i (t-s)} \Bigr )^{d/2} a(t, s, x, 
	y) e^{iS(t,
		s, x, y)},
	\]
	where for each $m > 0$ there is a constant $\gamma_m > 0$ such that
	\[
	\| a(t, s, x, y) - 1 \|_{C^m (\mf{R}^d_x \times \mf{R}^d_y)} \le 
	\gamma_m |t-s|^2.
	\]
	Moreover
	\[
	S(t, s, x, y) =  \fr{|x-y|^2}{2(t-s)} + (t-s) r(t, s, x, y),
	\]
	with
	\[
	|\partial_xr| + |\partial_yr| \le C(M_2) (1 + |x| + |y|),
	\]
	and for each multindex $\alpha$ with $|\alpha| \ge 2$, the quantity
	\begin{equation}
	\nonumber
	C_\alpha = \| \partial^\alpha_{x, y} r(t, s, \cdot, \cdot ) 
	\|_{L^\infty}
	\end{equation}
	is finite. The map $U(t, s) : \mcal{S}(\mf{R}^d) \to
	\mcal{S}(\mf{R}^d)$ is a topological isomorphism, and all implicit
	constants depend on finitely many $M_k$. 
	
\end{thm}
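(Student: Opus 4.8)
The plan is to build a short‑time parametrix by the WKB (geometric optics) ansatz and then upgrade it to the true propagator by a convergent Duhamel series. First I would solve the Hamilton--Jacobi equation
\[
\partial_t S + \tfr{1}{2} |\nabla_x S|^2 + V(t,x) = 0, \qquad S(t,s,x,y) \sim \tfr{|x-y|^2}{2(t-s)} \ \text{ as } \ t \to s,
\]
by the method of characteristics. Since $h(t,x,\xi) = \tfr{1}{2}|\xi|^2 + V(t,x)$ generates a globally Lipschitz Hamiltonian vector field (because $\partial_x^2 V \in L^\infty_{t,x}$), the flow $\Phi(t,s)$ is globally defined, and for $|t-s| \le \delta_0$ the map $\xi \mapsto x^t(y,\xi)$ is a diffeomorphism of $\mf{R}^d$ that is $O(|t-s|^2)$-close to $\xi \mapsto y+(t-s)\xi$; hence there is a unique bicharacteristic joining position $y$ at time $s$ to position $x$ at time $t$, and $S$ is defined as its classical action. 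Writing the connecting path as $q(\tau) = y + \tfr{\tau-s}{t-s}(x-y) + O(|t-s|^2)$ and using that $\partial_x V$ grows at most linearly while $\partial_x^k V$ is bounded for $k \ge 2$, I would read off the splitting $S = \tfr{|x-y|^2}{2(t-s)} + (t-s) r$ together with $|\partial_x r| + |\partial_y r| \lesssim 1 + |x| + |y|$ and $\|\partial^\alpha_{x,y} r\|_{L^\infty} < \infty$ for $|\alpha| \ge 2$, by differentiating the flow in its initial data.

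Next I would fix the amplitude. Substituting $U(t,s)(x,y) = (2\pi i(t-s))^{-d/2} a(t,s,x,y) e^{iS}$ into $i\partial_t U = H U$ and invoking the Hamilton--Jacobi equation reduces matters, after the singular part $\tfr{d}{2(t-s)}$ of $\tfr{1}{2}\Delta_x S$ cancels against the $t$-derivative of the prefactor, to a transport equation of the schematic form
\[
\partial_t a + \nabla_x S \cdot \nabla_x a + \tfr{t-s}{2}(\Delta_x r)\,a = \tfr{i}{2}\Delta_x a, \qquad a(s,s,\cdot,\cdot) = 1 .
\]
Because the right-hand side is backward-parabolic one cannot solve this exactly with good bounds; instead I would produce an approximate amplitude $a = 1 + (t-s)^2 a_1 + \cdots + (t-s)^{2N} a_N$ by solving the resulting hierarchy of pure transport ODEs along the characteristics of $\nabla_x S$. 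Each $a_j$ is bounded in every $C^m$ (its inhomogeneity involves only $\Delta_x$ of previously found terms and the controlled quantities above), so $\|a-1\|_{C^m} \lesssim_m |t-s|^2$, and the residual $E(t,s)$ in $i\partial_t\widetilde{U} = H\widetilde{U} + E$ has kernel equal to $(2\pi i(t-s))^{-d/2} e^{iS}$ times a function that, with all its derivatives, is $O(|t-s|^{K})$ for $K = K(N)$ as large as desired.

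With $\widetilde{U}(t,s)$ and $\widetilde{U}(s,s)=I$ in hand, I would recover the exact propagator from the Neumann series $U(t,s) = \sum_{n\ge 0} i^n \int_{s \le \tau_n \le \cdots \le \tau_1 \le t} \widetilde{U}(t,\tau_1) E(\tau_1,\tau_2)\cdots E(\tau_n,s)\,d\tau_1\cdots d\tau_n$, the $n=0$ term being $\widetilde{U}(t,s)$. The operator $E(\tau,s)$ is bounded on $L^2$ by the Asada--Fujiwara $L^2$ theory for oscillatory integral operators with nondegenerate quadratic-type phase, with norm $O(|t-s|^{K})$; choosing $K$ large makes the series converge in operator norm on the time scale $\delta_0$, and an induction on the kernels shows each term is smooth in $(x,y)$ and $O(|t-s|)$ smaller than its predecessor, so the sum still has the form $(2\pi i(t-s))^{-d/2}(1 + O(|t-s|^2)) e^{iS}$. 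Finally, the explicit kernel bounds show $U(t,s)\colon \mcal{S}(\mf{R}^d) \to \mcal{S}(\mf{R}^d)$ is continuous, with continuous inverse $U(s,t)$ of the same form. (Fujiwara's original route is the dual one: approximate the Feynman path integral by $N$-fold products of short-time classical-action kernels, evaluate the resulting Gaussian integrals by stationary phase, and pass to the mesh-zero limit; it is more faithful to the cited papers but more delicate to sketch.) The step I expect to be the main obstacle is the interplay between the global-in-$(x,y)$ solution of the Hamilton--Jacobi equation --- where the subquadratic hypothesis is genuinely used and one must track derivatives of the bicharacteristic flow in its initial data carefully enough to get the precise linear growth of $\partial_{x,y} r$ and the boundedness of its higher derivatives --- and the bookkeeping in the Duhamel correction, which must preserve exactly the normalization $a(s,s,\cdot,\cdot)=1$ and the $O(|t-s|^2)$ control uniformly in $x,y$. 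Keeping all error kernels controlled in $C^m$ uniformly in the spatial variables, which is what makes the series sum back to the stated form, is the technical crux.
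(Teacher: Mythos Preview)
The paper does not prove this theorem; it is quoted as a known result of Fujiwara with citations to \cite{fujiwara_fundamental_solution,fujiwara_path_integrals} and no argument is given. So there is no ``paper's own proof'' to compare your proposal against.

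Your sketch is a coherent outline of the standard WKB/parametrix construction (Hamilton--Jacobi phase from the classical action, transport hierarchy for the amplitude, Duhamel correction via a Neumann series with Asada--Fujiwara $L^2$ bounds), and you correctly flag that Fujiwara's own papers instead take the time-slicing (Feynman path integral) route, composing many short-time classical-action kernels and passing to the mesh-zero limit via stationary phase. Both approaches hinge on the same analytic input --- the subquadratic hypothesis guaranteeing global solvability of the Hamilton--Jacobi equation on a uniform time interval with the stated control on $r$ --- and both arrive at the same kernel form. Since the paper simply imports the result, either approach would serve; your WKB sketch is arguably easier to write down in outline, while Fujiwara's time-slicing gives sharper constants and is what the citation actually points to.
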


\begin{cor}[Dispersive and Strichartz estimates]
	\label{c:strichartz}
	If $V$ satisfies the hypotheses of the previous theorem, then $U(t,
	s)$ admits the fixed-time bounds
	\[
	\|U(t,s)\|_{L^1_x(\mf{R}^d) \to L^\infty_x(\mf{R}^d)} \lesssim 
	|t-s|^{-d/2}
	\]
	whenever $|t-s| \le \delta_0$. For any compact time interval $I$ and
	any exponents $(q,r)$ satisfying $2 \le q, r \le \infty$, $\tfr{2}{q} + 
	\tfr{d}{r} = \tfr{d}{2}$, and $(q, r, d) \ne (2, \infty, 2)$, we have
	\[
	\| U(t, s) f\|_{L^{q}_t L^r_x ( I \times \mf{R}^d )}
	\lesssim_{I} \|f\|_{L^2(\mf{R}^d)}.
	\]
\end{cor}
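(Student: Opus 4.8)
The plan is to reduce everything to Fujiwara's kernel formula (Theorem~\ref{t:propagator}) and the abstract Keel--Tao machinery. First I would read off the dispersive estimate directly: for $0 < |t-s| \le \delta_0$ the kernel representation gives $|U(t,s)(x,y)| \le (2\pi|t-s|)^{-d/2}|a(t,s,x,y)|$, and the amplitude bound $\|a - 1\|_{C^1} \le \gamma_1|t-s|^2 \le \gamma_1\delta_0^2$ forces $\|a\|_{L^\infty} \lesssim 1$, hence $\|U(t,s)\|_{L^1_x \to L^\infty_x} \lesssim |t-s|^{-d/2}$. Interpolating this against the bound $\|U(t,s)\|_{L^2_x \to L^2_x} \lesssim 1$ (indeed $U(t,s)$ is unitary, the family $H(t)$ being self-adjoint) gives $\|U(t,s)\|_{L^{r'}_x \to L^r_x} \lesssim |t-s|^{-d(\fr12 - \fr1r)}$ for all $2 \le r \le \infty$ and $|t-s| \le \delta_0$.

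Next I would establish the Strichartz estimate on a single short interval $I_0$ with $|I_0| \le \delta_0$. Fixing a reference point $s_0 \in I_0$ and invoking the propagator identity $U(t,s_0)\,U(\tau,s_0)^* = U(t,s_0)\,U(s_0,\tau) = U(t,\tau)$, the family $\{U(t,s_0)\}_{t \in I_0}$ satisfies the hypotheses of the abstract Keel--Tao theorem \emph{on} $I_0$: uniform $L^2 \to L^2$ bounds, together with $\|U(t,s_0)U(\tau,s_0)^*\|_{L^1_x \to L^\infty_x} = \|U(t,\tau)\|_{L^1_x\to L^\infty_x} \lesssim |t-\tau|^{-d/2}$, which is available for all $t,\tau \in I_0$ precisely because $|t-\tau| \le |I_0| \le \delta_0$. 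This yields $\|U(t,s_0)g\|_{L^q_t L^r_x(I_0\times\mf{R}^d)} \lesssim \|g\|_{L^2(\mf{R}^d)}$ for every admissible pair, with the sole exclusion $(q,r,d)=(2,\infty,2)$ being exactly the forbidden endpoint of Keel--Tao.

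To pass from short intervals to an arbitrary compact $I$, I would tile $I$ by $N \lesssim_I 1$ subintervals $I_1,\dots,I_N$ of length $\le \delta_0$, with left endpoints $s_j$, and for $t \in I_j$ write $U(t,s)f = U(t,s_j)\bigl(U(s_j,s)f\bigr)$. Since $\|U(s_j,s)f\|_{L^2} = \|f\|_{L^2}$, the short-interval estimate applies on each $I_j$, and summing $q$-th powers over the finitely many $j$ (or taking a maximum when $q=\infty$) gives the estimate on $I$ with a constant controlled by $|I|$, $\delta_0$, and hence by finitely many of the $M_k$. I expect the only genuine content to be the endpoint ($q=2$, $d\ge 3$) case of the abstract Strichartz inequality, which I would simply cite; everything else is bookkeeping around the pointwise kernel bound and the composition law $U(t,\tau)=U(t,s_0)U(s_0,\tau)$.
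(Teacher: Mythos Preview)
Your proposal is correct and follows essentially the same route as the paper: extract the $L^1 \to L^\infty$ dispersive bound from Fujiwara's kernel formula, invoke Keel--Tao on a short interval of length $\le \delta_0$, and then tile an arbitrary compact interval by such subintervals, using the composition law $U(t,s) = U(t,t_{j-1})U(t_{j-1},s)$ together with $L^2$-unitarity to transfer the estimate and sum. The paper's proof is just a terser rendition of exactly this argument.
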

\begin{proof}
	Combining Theorem~\ref{t:propagator} with the general machinery of Keel-Tao~\cite{keel-tao}, we obtain
	\[
	\| U(t, s) f\|_{ L^q_t L^r_x ( \{ |t-s| \le \delta_0 \} \times
		\mf{R}^d)} \lesssim \|f\|_{L^2}.
	\]
	If $I = [T_0, T_1]$ is a general time interval, partition it into
	subintervals $[t_{j-1}, t_{j}]$ of length at most $\delta_0$. For each
	such subinterval we can write $U(t, s) = U(t, t_{j-1}) U(t_{j-1}, s)$, 
	thus
	\[
	\| U(t, s) f\|_{L^q_t L^r_x ( [t_{j-1}, t_{j}] \times \mf{R}^d)}
	\lesssim \|U (t_{j-1}, s) f\|_{L^2} = \|f\|_{L^2}.
	\]
	The corollary follows from summing over the subintervals.
\end{proof}

Recall that solutions to the free particle equation
$i\partial_t u = -\tfr{1}{2}\Delta u$ with $ u(0) = \phi$ transform
as follows under phase space
translations of the initial data:
\begin{equation}
\label{e:galilei_free_particle}
e^{\fr{it\Delta}{2}} \pi(x_0, \xi_0) \phi(x) = e^{i[(x-x_0)\xi_0 - 
	\fr{1}{2}t
	|\xi_0|^2]} (e^{\fr{it\Delta}{2}} \phi)(x - x_0 - t\xi_0).
\end{equation}
Physically, $\pi(x_0, \xi_0)\phi$ represents the state of a quantum
particle with position $x_0$ and momentum $\xi_0$. The above
relation states that the time evolution of $\pi(x_0, \xi_0)\phi$ in
the absence of a potential oscillates in space and time at frequency
$\xi_0$ and $-\tfr{1}{2}|\xi_0|^2$, respectively, and tracks the
classical trajectory $t \mapsto x_0 + t \xi_0$.

In the presence of a potential, the time evolution of such modified
initial data admits an analogous description:
\begin{lma}
	\label{l:galilei}
	If $U(t, s)$ is the propagator for
	$H = -\tfr{1}{2}\Delta + V(t, x)$, then
	\[
	\begin{split}
	U(t,s) \pi(z_0^s) \phi(x) &=  e^{i [ (x-x_0^t) \xi_0^t + \int_s^t
		\fr{1}{2} |\xi_0^\tau|^2 - V(\tau, x_0^\tau) \, d \tau]} U^{z_0}(t,
	s) \phi (x - x_0^t)
	\\ &=e^{i \alpha(t, s, z_0)} \pi(
	z_0^t) [U^{z_0} (t, s) \phi](x),
	\end{split}
	\]
	where 
	\[
	\alpha(t, s, z) = \int_s^t \fr{1}{2} |\xi_0^\tau|^2 - V(\tau, x_0^\tau) 
	\, 
	d\tau
	\]
	is the classical action, $U^{z_0}(t, s)$ is the propagator for $H^{z_0} 
	=
	-\tfr{1}{2} \Delta + V^{z_0} (t, x)$ with
	\[
	V^{z_0}(t, x) = V(t, x_0^t + x) - V(t,  x_0^t) -  x \partial_x V (t,
	x_0^t) = \langle x, Qx \rangle
	\]
	where  
	\[
	Q(t,x) = \int_0^1 (1-\theta) \partial^2_x V(t, x_0^t + \theta x) \,
	d\theta,
	\]
	and $z_0^t = (x_0^t, \xi_0^t)$ is the trajectory of $z_0$ under the
	Hamiltonian flow of the symbol $h = \tfr{1}{2} |\xi|^2 + V(t, x)$.
	The propagator $U^{z_0}(t, s)$ is continuous on $\mcal{S}(\mf{R}^d)$
	uniformly in $z_0$ and $|t-s| \le \delta_0$.
\end{lma}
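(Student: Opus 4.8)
The plan is to verify the claimed formula by differentiating both sides in $t$ and checking that they satisfy the same Schr\"{o}dinger-type equation with the same initial data at $t = s$; uniqueness of solutions (which follows from unitarity of the propagators, itself a consequence of Theorem~\ref{t:propagator}) then forces equality. First I would record that at $t = s$ the trajectory satisfies $z_0^s = (x_0^s, \xi_0^s)$, the action vanishes ($\alpha(s,s,z_0) = 0$), and $U^{z_0}(s,s) = \opn{Id}$, so the right-hand side reduces to $\pi(z_0^s)\phi$, matching the left. It is also worth noting at the outset that the Taylor-with-remainder identity
\[
V(t, x_0^t + x) = V(t, x_0^t) + x\,\partial_x V(t, x_0^t) + \langle x, Q(t,x) x\rangle
\]
is exactly the definition of $V^{z_0}$ and $Q$, so the two displayed expressions for $V^{z_0}$ agree by construction; in one dimension $\langle x, Qx\rangle$ is just $Q x^2$, and the stated bounds on $Q$ follow from $\|\partial_x^2 V\|_{L^\infty} \le M_2$.

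The main computation is the time derivative. Abbreviate $g(t,x) = U^{z_0}(t,s)\phi(x)$, which solves $i\partial_t g = -\tfr{1}{2}\Delta g + V^{z_0}(t,x)g$ by definition of $U^{z_0}$. I would write the right-hand side as $W(t,x) := e^{i\theta(t,x)} g(t, x - x_0^t)$ with phase $\theta(t,x) = (x - x_0^t)\xi_0^t + \alpha(t,s,z_0)$, and compute $i\partial_t W$ using the product and chain rules, keeping track of three sources of $t$-dependence: the phase $\theta$, the argument shift $x_0^t$, and the intrinsic time dependence of $g$. Using Hamilton's equations $\dot x_0^t = \xi_0^t$ and $\dot\xi_0^t = -\partial_x V(t, x_0^t)$, one finds $\partial_t\theta = -\dot x_0^t \xi_0^t + (x-x_0^t)\dot\xi_0^t + \tfr12|\xi_0^t|^2 - V(t,x_0^t) = -\tfr12|\xi_0^t|^2 - (x - x_0^t)\partial_x V(t,x_0^t) - V(t, x_0^t)$. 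Similarly $\nabla W = e^{i\theta}[i\xi_0^t g + (\nabla g)(t, x - x_0^t)]$, and squaring gives $-\tfr12\Delta W = e^{i\theta}[\tfr12|\xi_0^t|^2 g - i\xi_0^t\cdot\nabla g - \tfr12\Delta g](t, x - x_0^t)$. Assembling the pieces, the intrinsic $\partial_t g$ term contributes $(-\tfr12\Delta g + V^{z_0}(t,\cdot)g)$ evaluated at $x - x_0^t$; combining with the phase and translation terms, the $-i\xi_0^t\cdot\nabla g$ contributions cancel, the $\tfr12|\xi_0^t|^2$ terms cancel, and one is left with
\[
i\partial_t W = \Bigl[-\tfrac12\Delta g + V^{z_0}(t, x - x_0^t) g + \bigl(V(t,x_0^t) + (x-x_0^t)\partial_x V(t,x_0^t)\bigr) g\Bigr](t, x - x_0^t) e^{i\theta}.
\]
By the Taylor identity above, $V^{z_0}(t, x - x_0^t) + V(t,x_0^t) + (x - x_0^t)\partial_x V(t, x_0^t) = V(t, x)$, so $i\partial_t W = -\tfr12\Delta W + V(t,x)W$, i.e.\ $W$ solves the same equation as $U(t,s)\pi(z_0^s)\phi$. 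Uniqueness closes the argument.

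I expect the bookkeeping in the derivative computation to be the only real obstacle — it is entirely routine but one must be careful that the cross terms cancel correctly, which they do precisely because $(x_0^t, \xi_0^t)$ follows the classical flow and $\alpha$ is the classical action; this is the content of the "approximate Galilean invariance" remarked on in the introduction. The remaining assertion, that $U^{z_0}(t,s)$ is continuous on $\mcal{S}(\mf{R}^d)$ uniformly in $z_0$ for $|t - s| \le \delta_0$, follows by applying Theorem~\ref{t:propagator} to the potential $V^{z_0}$: its seminorms $\|\partial_x^\alpha V^{z_0}\|_{L^\infty}$ for $|\alpha| \ge 2$ equal $\|\partial_x^\alpha V(t, x_0^t + \cdot)\|_{L^\infty} = \|\partial_x^\alpha V\|_{L^\infty} \le M_{|\alpha|}$, which are bounded uniformly in $z_0$, and $V^{z_0}(t,0) = 0$ together with $\partial_x V^{z_0}(t,0) = 0$ controls the low-order behavior; hence $\delta_0$ and all the constants in Theorem~\ref{t:propagator} may be taken independent of $z_0$, giving the stated uniform continuity.
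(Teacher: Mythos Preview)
Your proposal is correct and follows exactly the approach the paper takes: the paper's proof simply says the formula ``is verified by direct computation'' and then observes $\|\partial_x^k V^{z_0}\|_{L^\infty} = \|\partial_x^k V\|_{L^\infty}$ for $k\ge 2$ to invoke Theorem~\ref{t:propagator} for the uniform continuity. You have correctly spelled out that direct computation and the seminorm observation.
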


\begin{proof}
	The formula for $U(t, s) \pi(z_0^s) \phi$ is verified by direct
	computation. To obtain the last statement, we notice that
	$\|\partial^k_x V^{z_0}\|_{L^\infty} = \| \partial^k_x V
	\|_{L^\infty}$
	for $k \ge 2$, and appeal to the last part of
	Theorem~\ref{t:propagator}.
\end{proof}

\begin{rmks}\leavevmode\\[-5mm]
	\begin{itemize}
		\item Lemma~\ref{l:galilei} reduces to~\eqref{e:galilei_free_particle} when $V = 
		0$ and gives analogous formulas when $V$ is a polynomial of degree at 
		most
		$2$. When $V = Ex$ is the potential for a constant electric field, 
		we recover the Avron-Herbst
		formula by setting $z_0 = 0$ (hence $V^{z_0} = 0$). For $V = \sum_j 
		\omega_j 
		x_j^2$, we get the generalized Galilean symmetry 
		mentioned in the introduction.
		
		\item Direct computation shows that the above identity extends to 
		semilinear
		equations of the form
		\[
		i\partial_t u = (-\tfrac{1}{2}\Delta + V) u \pm |u|^p u.
		\]
		That is, if $u$ is the solution with $u(0) = \pi(z_0) \psi$, then
		\[
		u(t) = e^{i\int_0^t \fr{1}{2} |\xi_0^\tau|^2 - V(\tau, x_0^\tau) \, 
		d\tau} \pi(z_0^t) 
		u_{z_0}(t)
		\]
		where $u_{z_0}$ solves
		\[
		i\partial_t u_{z_0} = \bigl(-\tfrac{1}{2} \Delta + V^{z_0}  \bigr)
		u_{z_0} \pm |u_{z_0}|^p u_{z_0} \quad\text{with}\quad u_{z_0}(0) = \psi,
		\]
		where the potential $V^{z_0}$ is as defined in Lemma~\ref{l:galilei}. 
		
		\item One can combine this lemma with a wavepacket decomposition to
		represent a solution $U(t, 0) f$ as a sum of wavepackets
		\[
		U(t, 0)f = \int_{z_0 \in T^* \mf{R}^d} \langle f , \psi_{z_0} 
		\rangle U(t, 0)(\psi_{z_0})  \, dz_0,
		\]
		where the oscillation of each wavepacket $U(t, 0)(\psi_{z_0})$
		is largely captured in the phase
		\[
		(x-x_0^t)\xi_0^t + \int_0^t \fr{1}{2}|\xi_0^\tau|^2 - V(\tau, 
		x_0^\tau) \, d\tau.
		\]
		Our arguments will make essential use of this information. Analogous 
		wavepacket representations have been
		constructed by Koch and Tataru 
		for a broad
		class of pseudodifferential operators; see~\cite[Theorem 
		4.3]{KochTataru2005} and its proof.
		
	\end{itemize}
\end{rmks}

\section{Locating a length scale}
\label{s:inv_hls}

	The first step in the proof of Theorem~\ref{t:inv_str} is to identify both 
	a characteristic time 
	scale and
	temporal center for our sought-after bubble of concentration. Recall
	that the usual $TT^*$ proof of the non-endpoint Strichartz inequality
	combines the dispersive estimate with the Hardy-Littlewood-Sobolev
	inequality in time. By using a refinement of the latter, one
	can locate a time interval on which the solution is large in a
	non-scale invariant spacetime norm.
	
	
	\begin{prop}
		\label{p:inv_hls}
		Consider a pair $(q, r)$ in Corollary~\ref{c:strichartz} with $2 < q < 
		\infty$, and suppose
		$u = U(t, 0)f$ solves
		\[
		i\partial_t u = \bigl ( -\tfrac{1}{2}\Delta + V \bigr )u \quad\text{with}\quad u(0) = f \in 
		L^2(\mf{R}^d)
		\]
		with $\|f\|_{L^2 (\mf{R}^d)} = 1$ and
		$\|u\|_{L^q_t L^r_x ([-\delta_0, \delta_0] \times \mf{R}^d)} \ge
		\varepsilon$,
		where $\delta_0$ is the constant from Theorem~\ref{t:propagator}. 
		Then
		there is a time interval $J\subset [-\delta_0, \delta_0]$ such that
		\[
		\|u\|_{L^{q-1}_t L^r_x (J \times \mf{R}^d)} \gtrsim 
		|J|^{\fr{1}{q(q-1)}} \varepsilon^{\fr q{q-2}}.
		\]
	\end{prop}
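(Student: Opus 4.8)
The plan is to run a quantitative version of the $TT^*$ proof of the Strichartz inequality: rather than estimating the resulting bilinear quantity by the plain Hardy--Littlewood--Sobolev inequality (which would merely reprove Strichartz), I extract from the hypothesis a lower bound on that quantity and feed it into a \emph{refined} Hardy--Littlewood--Sobolev inequality, which produces the time interval. Write $g(t):=\|u(t)\|_{L^r_x(\mf{R}^d)}$, so that $\|g\|_{L^q([-\delta_0,\delta_0])}=\|u\|_{L^q_tL^r_x}=:\eps'$ with $\eps'\ge\eps$ (and $\eps'\lesssim 1$ by Corollary~\ref{c:strichartz}); since we may take $f$ Schwartz, $g$ is continuous and compactly supported. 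Testing the $L^q_tL^r_x$ norm against its dual exponent, using unitarity of $U(t,0)$ on $L^2$, Cauchy--Schwarz, and the dispersive bound $\|U(t,s)\|_{L^{r'}_x\to L^r_x}\lesssim|t-s|^{-2/q}$ (valid for $|t-s|\le\delta_0$ by Corollary~\ref{c:strichartz}; pass first from $[-\delta_0,\delta_0]$ to $[-\tfr{\delta_0}{2},\tfr{\delta_0}{2}]$, or tile, so all time differences are $\le\delta_0$), one obtains
\[
\eps'^{\,2q}=\|u\|_{L^q_tL^r_x}^{2q}\ \lesssim\ \|f\|_{L^2}^2\iint_{[-\delta_0,\delta_0]^2}\frac{g(s)^{q-1}g(t)^{q-1}}{|s-t|^{2/q}}\,ds\,dt .
\]

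Set $G:=g^{q-1}\ge 0$ and $q':=\tfr{q}{q-1}$. Then $\|G\|_{L^{q'}}=\|g\|_{L^q}^{q-1}=\eps'^{\,q-1}$, and for every interval $J$ one has $\|G\|_{L^1(J)}=\|g\|_{L^{q-1}(J)}^{q-1}=\|u\|_{L^{q-1}_tL^r_x(J)}^{q-1}$. The key analytic input is the following one-dimensional refined Hardy--Littlewood--Sobolev inequality: for $0<\tfr{2}{q}<1$ and $G\ge0$,
\[
\iint_{\mf{R}^2}\frac{G(s)G(t)}{|s-t|^{2/q}}\,ds\,dt\ \lesssim\ \Bigl(\sup_{J\subset\mf{R}}|J|^{-1/q}\|G\|_{L^1(J)}\Bigr)^{\frac{q-2}{q-1}}\|G\|_{L^{q'}(\mf{R})}^{\frac{q}{q-1}},
\]
the supremum being over all bounded intervals; this is a standard refinement of the classical inequality, in the spirit of the Fourier-side refined Strichartz estimates of~\cite{moyua-vargas-vega,begout-vargas,carles-keraani}. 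Combining the two displays, and using $\|G\|_{L^{q'}}^{q/(q-1)}=\eps'^{\,q}$ to cancel a factor $\eps'^{\,q}$ from the left, gives $\eps'^{\,q}\lesssim\bigl(\sup_J|J|^{-1/q}\|G\|_{L^1(J)}\bigr)^{(q-2)/(q-1)}$, hence $\sup_J|J|^{-1/q}\|G\|_{L^1(J)}\gtrsim\eps'^{\,q(q-1)/(q-2)}\ge\eps^{\,q(q-1)/(q-2)}$. Picking an interval $J$ — which we may take inside $[-\delta_0,\delta_0]$, since $G$ is supported there — that nearly attains this supremum, and rewriting $\|G\|_{L^1(J)}=\|u\|_{L^{q-1}_tL^r_x(J)}^{q-1}$, we get $\|u\|_{L^{q-1}_tL^r_x(J)}^{q-1}\gtrsim|J|^{1/q}\eps^{\,q(q-1)/(q-2)}$, which after taking $(q-1)$-th roots is exactly the asserted bound.

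The main obstacle is the refined Hardy--Littlewood--Sobolev inequality itself; everything else is the soft reduction above. Its proof decomposes the Riesz kernel dyadically, $|s-t|^{-2/q}=\sum_{j\in\mb{Z}}K_j(s-t)$ with $K_j$ supported in $\{|s-t|\sim 2^j\}$ and $\|K_j\|_\infty\lesssim 2^{-2j/q}$, and on each scale covers $\mf{R}$ by a bounded-overlap family of intervals of length $2^j$, so that the $j$-th piece is controlled by $2^{-2j/q}\sum_{|J|=2^j}\|G\|_{L^1(J)}^2$. Interpolating each local mass between the Morrey-type bound $\|G\|_{L^1(J)}\le 2^{j/q}\sup_{J}|J|^{-1/q}\|G\|_{L^1(J)}$ and the H\"older bound $\|G\|_{L^1(J)}\le 2^{j/q}\|G\|_{L^{q'}(J)}$, and using $\sum_{|J|=2^j}\|G\|_{L^{q'}(J)}^{q'}=\|G\|_{L^{q'}}^{q'}$ together with the embedding $\ell^{q'}\hookrightarrow\ell^2$ (valid since $q'\le2$), produces the refined estimate on each single scale. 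The delicate point is that these per-scale estimates are scale-invariant and hence not directly summable in $j$; one must split the sum at a suitably chosen scale, combining a ``saturated'' estimate at small scales with a geometrically decaying estimate at large scales, exploiting that $\sum_{|J|=2^j}\|G\|_{L^1(J)}^2$ is nondecreasing in $j$ and bounded by $\min\bigl(2^{2j/q}\|G\|_{L^{q'}}^2,\ \|G\|_{L^1}^2\bigr)$. This bookkeeping is where the real work lies.
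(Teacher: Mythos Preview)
Your overall strategy coincides with the paper's: carry out the $TT^*$ argument to reduce matters to a one-dimensional statement about the Riesz potential acting on $G(t)=\|u(t)\|_{L^r_x}^{q-1}$, and then invoke a refined/inverse form of Hardy--Littlewood--Sobolev to produce the time interval.  The paper packages the HLS step as an \emph{inverse} lemma (if $\|G\|_{L^{q'}}\le 1$ and $\||t|^{-2/q}*G\|_{L^q}\ge\varepsilon$, then $\sup_J |J|^{-1/q}\|G\|_{L^1(J)}\gtrsim \varepsilon^{(q-1)/(q-2)}$) and proves it by Hedberg's pointwise method: split $\int |t-s|^{-2/q}G(s)\,ds$ at a radius $r$ depending on $t$, bound the near part by $r^{1-2/q}MG(t)$ and the far part by $\eta\, r^{1/q-2/q}$ (where $\eta$ is the Morrey quantity), and optimise in $r$ to get $\bigl||t|^{-2/q}*G\bigr|\lesssim \eta^{(q-2)/(q-1)}[MG]^{1/(q-1)}$.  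Integrating and pairing with $G$ gives exactly your refined HLS with the stated exponents.

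Where your proposal departs from the paper is in the \emph{proof} of the key HLS lemma, and here there is a genuine gap.  Your dyadic argument yields the per-scale bound $2^{-2j/q}\sum_{|J|=2^j}\|G\|_{L^1(J)}^2\lesssim \eta^{(q-2)/(q-1)}\|G\|_{L^{q'}}^{q/(q-1)}$, which is correct but scale-invariant and therefore not summable in $j$.  The fix you propose---splitting the sum in $j$ using the monotonicity of $S_j:=\sum_{|J|=2^j}\|G\|_{L^1(J)}^2$ together with the two bounds $S_j\le\min\bigl(2^{2j/q}\|G\|_{L^{q'}}^2,\ \|G\|_{L^1}^2\bigr)$---cannot work as stated: neither of those two bounds contains $\eta$, so any combination of them alone proves at best the ordinary HLS inequality, not the refined one.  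Conversely, the only $\eta$-dependent bound you have at hand is the scale-invariant one, and counting the number of ``saturated'' scales between the two regimes introduces a logarithm $\log(\|G\|_{L^1}/\|G\|_{L^{q'}})$ that you have no way to absorb.  (The references you cite for ``standard refinements'' concern Fourier-side restriction estimates and do not supply this physical-space inequality.)  The cleanest repair is to replace the dyadic summation by Hedberg's pointwise optimisation, which sidesteps the issue entirely; once that lemma is in place, your reduction goes through and matches the paper's proof line for line.
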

	
	\begin{rmk}
		That this estimate singles out a special length scale is easiest to see
		when $V = 0$. For ease of notation, suppose $J=[0,1]$ in Proposition~\ref{p:inv_hls}.  	As $\|u\|_{L^q_t L^r_x(\mf{R} \times \mf{R}^d)} \lesssim \|f\|_{L^2} <
		\infty$, for each $\eta > 0$ there exists $T > 0$ so that (suppressing the
		region of integration in $x$) $\|u\|_{L^q_t L^r_x (\{ |t| \geq T\})} < \eta$.  With $u_{\lambda}(t, x) = \lambda^{-d/2} u(\lambda^{-2}t, \lambda^{-1} x)
		= e^{\fr{it\Delta}{2}} (f_\lambda)$ where $f_\lambda = \lambda^{-d/2} f(\lambda^{-1})$, we have
		\[
		\begin{split}
		\|u_{\lambda} \|_{L^{q-1}_t L^r_x ([0, 1])} &\le \|
		u_{\lambda}\|_{L^{q-1}_t L^r_x ([0, \lambda^2 T]) }+ \| u_{\lambda}
		\|_{L^{q-1}_t L^r_x ([\lambda^2 T, 1])} \\
		&\le (\lambda^2
		T)^{\fr{1}{q(q-1)}} \| u_\lambda \|_{L^q_t L^r_x ( [0, \lambda^2
			T])} + \|u_\lambda \|_{L^q_t L^r_x ([ \lambda^2 T, 1])}\\
		&\le (\lambda^2 T)^{\fr{1}{q(q-1)}} \|u\|_{L^q_t L^r_x} + \eta,
		\end{split}
		\]
		which shows that
		\[
		\| u_{\lambda} \|_{L^{q-1}_t L^r_x ([0, 1] \times \mf{R}^d)} \to 0
		\text{ as } \lambda \to 0.
		\]
		Thus, Proposition~\ref{p:inv_hls} shows that concentration of the solution cannot occur at arbitrarily small scales. Similar
		considerations preclude $\lambda \to \infty$.
	\end{rmk}
	
	We shall need the following inverse
	Hardy-Littlewood-Sobolev estimate. For $0 < s < d$, denote by $I_s f(x)
	= (|D|^{-s} f) (x) = c_{s,d} \int_{\mf{R}^d} \fr{f(x-y)}{|y|^{d-s}} \, dy$
	the fractional integration operator.

	\begin{lma}[Inverse HLS] Fix $d\geq 1$, $0<\gamma<d$, and $1<p<q<\infty$ 
	obeying
		$\tfrac dp = \tfrac dq+d - \gamma$.
		If $f\in L^p(\R^d)$ is such that
		$$
		\| f \|_{L^p(\R^d)} \leq 1 \qtq{and} \| |x|^{-\gamma} * f \|_{L^q} \geq 
		\eps,
		$$
		then there exists $r>0$ and $x_0\in \R^d$ so that
		\begin{equation}\label{E:inv con}
		\int_{r<|x-x_0|<2r} |f(x)|\,dx \gtrsim \eps^{\frac{q}{q-p}} 
		r^{\frac{d}{p'}}.
		\end{equation}
	\end{lma}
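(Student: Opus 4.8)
The plan is to exploit the scaling structure of the inequality $\| |x|^{-\gamma} * f \|_{L^q} \geq \eps$ by decomposing $f$ dyadically in space and reducing to a single scale. First I would write $f = \sum_{j \in \mb{Z}} f_j$ where $f_j = f \cdot \mathbf{1}_{\{|x| \sim 2^j\}}$ is the restriction of $f$ to the dyadic annulus of radius $2^j$ (or, better, work with a partition adapted to an arbitrary center, to be chosen later); the more flexible route is to keep $f$ whole for the moment and first locate a point $x_0$ and scale $r$ where a localized piece is large. The key observation is that the bound to be proved, \eqref{E:inv con}, is precisely the statement that \emph{some} dyadic piece of $f$, measured in $L^1$ on an annulus, is bounded below by a power of $\eps$ times the natural scaling factor $r^{d/p'}$. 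Since $\|f\|_{L^p} \le 1$ controls each piece in $L^p$, and $L^1$ on a ball of radius $r$ is controlled via H\"older by $r^{d/p'} \|f\|_{L^p(\text{ball})}$, the quantity $r^{-d/p'} \int_{r < |x-x_0| < 2r} |f|$ is always $\lesssim \|f\|_{L^p}$; we must show it is $\gtrsim \eps^{q/(q-p)}$ for a good choice of $(x_0, r)$.

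The mechanism is an interpolation/bootstrap between the hypothesis $\| |x|^{-\gamma}*f\|_{L^q} \ge \eps$ and the trivial bound $\| |x|^{-\gamma}*f\|_{L^\infty} \lesssim \sup_{x_0, r} r^{-(d-\gamma)} \int_{|x-x_0|<r}|f|$ together with $\| |x|^{-\gamma}*f\|_{L^{p}}$-type control from $\|f\|_{L^p} \le 1$ (via Hardy--Littlewood--Sobolev in the forward direction, $\| |x|^{-\gamma}*f\|_{L^q} \lesssim \|f\|_{L^p}$ — note the exponents match). Concretely, I would argue: the value $(|x|^{-\gamma}*f)(x)$ at any point decomposes as a sum over dyadic scales $\rho = 2^k$ of contributions $\rho^{-(d-\gamma)} \int_{|x-y|\sim \rho} |f(y)|\,dy$, each of which is at most $\rho^{-(d-\gamma)} \cdot \rho^{d/p'} \cdot (\text{local } L^p \text{ mass}) = \rho^{\gamma - d/p}(\text{local } L^p \text{ mass})$; summing the geometric-type series in $k$, if \emph{every} dyadic annular piece had $L^1$-mass $\le \delta \rho^{d/p'}$, one would get a pointwise bound $|(|x|^{-\gamma}*f)(x)| \lesssim \delta^{\theta}\|f\|_{L^p}^{1-\theta}$ for an appropriate $\theta$, by splitting the sum at the scale where the two competing estimates balance. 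Taking $L^q$ norms and using the forward HLS inequality on the ``tail'' then forces $\eps \lesssim \delta^\theta$, i.e. $\delta \gtrsim \eps^{1/\theta}$; unwinding the exponent, $1/\theta = q/(q-p)$ as the statement demands. This produces the desired $(x_0, r)$.

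The main obstacle I anticipate is making the pointwise decomposition argument genuinely quantitative: the naive ``every annular piece is small'' hypothesis must be leveraged uniformly over all centers $x_0$ simultaneously, and one has to interpolate correctly between the $L^1$-type (small scales / local) information and the global $L^p$-via-HLS information to land on exactly the exponent $\theta = (q-p)/q$. A clean way to organize this is: fix a large parameter $\Lambda$, split $|x|^{-\gamma}*f = (|x|^{-\gamma}\mathbf{1}_{|x|\le \Lambda})*f + (|x|^{-\gamma}\mathbf{1}_{|x|>\Lambda})*f$; the first term is $\lesssim \Lambda^{\gamma - d/p} \sup \delta$ if all local masses are $\le \delta(\cdot)^{d/p'}$ (summing the finitely many dyadic scales below $\Lambda$), while the second term has $L^q$ norm $\lesssim \Lambda^{-(d-\gamma) + (d/q' ... )}\|f\|_{L^p}$ — wait, more precisely, Young/HLS gives the tail a factor that decays in $\Lambda$. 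Optimizing over $\Lambda$ balances the two and yields $\eps \lesssim \delta^{q/(q-p)}$ up to harmless constants. One then passes from the supremum over centers and scales being small to the existence of a single good $(x_0,r)$ by contraposition. The routine book-keeping of which $L^p \to L^q$ versus $L^p \to L^\infty$ estimate controls which scale, and verifying the exponent $r^{d/p'}$ survives the rescaling, I would leave to the formal write-up.
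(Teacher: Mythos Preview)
Your overall strategy---assume for contradiction that $\sup_{x_0,r} r^{-d/p'}\int_{r<|x-x_0|<2r}|f|\le\delta$ is small and show this forces $\||x|^{-\gamma}*f\|_{L^q}$ to be small---is correct and is exactly the paper's approach (Hedberg's proof of HLS run backwards). You also arrive at the correct exponent $q/(q-p)$.

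However, your ``clean way to organize this'' has the two scale regimes swapped, and as written neither half works. Applying the smallness hypothesis at scale $\rho$ contributes $\rho^{-\gamma}\cdot\delta\rho^{d/p'}=\delta\rho^{d/p'-\gamma}=\delta\rho^{-d/q}$ to the convolution at a point; since the exponent is negative, this is summable over \emph{large} scales $\rho>r$ (giving $\lesssim\delta r^{-d/q}$) but diverges as $\rho\to 0$---there are not ``finitely many dyadic scales below $\Lambda$''. Conversely, the far-field kernel $|x|^{-\gamma}\mathbf{1}_{|x|>\Lambda}$ just fails to lie in $L^{d/\gamma}$ (the Young exponent for $L^p\to L^q$), so neither Young nor forward HLS gives any decay in $\Lambda$ for the tail. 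Your earlier remark about interpolating against the ``local $L^p$ mass'' does not help either, since H\"older gives the same $\rho^{-d/q}$ scaling as the smallness bound and hence no crossover.

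The missing ingredient is the Hardy--Littlewood maximal function for the near field: the layer-cake bound
\[
\int_{|y|\le r}|f(x-y)||y|^{-\gamma}\,dy\lesssim r^{d-\gamma}[Mf](x)
\]
is summable at small scales because $d-\gamma>0$. Balancing this against the far-field smallness bound $\delta r^{-d/q}$ and optimizing in $r$ (which now depends on $x$ through $Mf(x)$) yields the pointwise estimate
\[
|(|x|^{-\gamma}*f)(x)|\lesssim \delta^{1-p/q}[Mf(x)]^{p/q};
\]
taking the $L^q$ norm and invoking $\|Mf\|_{L^p}\lesssim\|f\|_{L^p}\le 1$ gives $\eps\lesssim\delta^{1-p/q}$, which is the desired contradiction for $\delta$ a small multiple of $\eps^{q/(q-p)}$. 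Once this correction is made, your argument is identical to the paper's.
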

	
	\begin{proof}
		Our argument is based off the proof of the usual 
		Hardy--Littlewood--Sobolev inequality due to Hedberg 
		\cite{Hedberg1972}; see also \cite[\S VIII.4.2]{stein}.
		
		Suppose, in contradiction to \eqref{E:inv con}, that
		\begin{equation}\label{E:inv con'}
		\sup_{x_0,r} \  r^{-\frac{d}{p'}} \!\int_{r<|x-x_0|<2r} |f(x)|\,dx \leq 
		\eta \eps^{\frac{q}{q-p}}
		\end{equation}
		for some small $\eta=\eta(d,p,\gamma)>0$ to be chosen later.

		As in the Hedberg argument, a layer-cake decomposition of 
		$|y|^{-\gamma}$ yields the following bound in terms of the maximal 
		function:
		$$
		\int_{|y|\leq r} |f(x-y)| |y|^{-\gamma} \,dy \lesssim r^{d-\gamma} 
		[Mf](x) .
		$$
		On the other hand, summing \eqref{E:inv con'} over dyadic shells yields
		$$
		\int_{|y|\geq r} |f(x-y)| |y|^{-\gamma} \,dy \lesssim  
		\eps^{\frac{q}{q-p}} \,\eta\, r^{\frac d{p'}-\gamma} .
		$$
		Combining these two estimates and optimizing in $r$ then yields
		$$
		\biggl| \int_{\R^d} |f(x-y)| |y|^{-\gamma} \,dy \biggr| \lesssim  \eps 
		\eta^{1-\frac pq} | [Mf](x) |^{\frac pq}
		$$
		and hence
		$$
		\| |x|^{-\gamma} * f \|_{L^q} \lesssim \eps \eta^{1-\frac pq} \| Mf 
		\|_{L^p(\R^d)}^{\frac pq} \lesssim \eps \eta^{1-\frac pq} .
		$$
		Choosing $\eta>0$ sufficiently small then yields the sought-after 
		contradiction.
	\end{proof}

	\begin{proof}[Proof of Proposition~\ref{p:inv_hls}]
		Define the map $T: L^2_x \to L^q_t L^r_x$ by $Tf(t) = U(t, 0)
		f$, which by Corollary~\ref{c:strichartz} is continuous. By 
		duality, 
		$\varepsilon \le \| u \|_{L^q_t L^r_x}$ implies
		$\varepsilon \le \|T^* \phi \|_{L^2_x}$, where
		\[
		\phi = \fr{ |u|^{r-2} u}{ \|u(t)\|_{L^r_x}^{r-1}} \fr{ \|u(t)
			\|_{L^r_x}^{q-1}}{ \|u \|_{L^q_t L^r_x}^{q-1}}
		\]
		satisfies $\| \phi\|_{L^{q'}_t L^{r'}_x} = 1$ and
		\[
		T^* \phi = \int U(0, s) \phi(s) \, ds.
		\]
		By the dispersive estimate of Corollary~\ref{c:strichartz},
		\begin{align*}
		\varepsilon^2&\le \langle T^* \phi, T^* \phi \rangle_{L^2_x} = \langle \phi, T	T^* \phi \rangle_{L^2_{t,x}} = \iiint \overline{\phi(t)} U(t,s) \phi(s) \,dx  ds dt 
		\lesssim \iint \fr{ G(t) G(s)} { |t-s|^{2/q}} \, dsdt\\
		& \lesssim \|G\|_{L_t^{q'}} \||t|^{-\frac2q}*G\|_{L_t^q},
		\end{align*}
		where $G(t) = \| \phi(t)\|_{L^{r'}_x}$.  Appealing to the previous lemma with $p = q'$, we derive
		\begin{align*}
		\sup_J |J|^{-\frac1q}\|G\|_{L^1_t(J)}\gtrsim \eps^{\frac{2(q-1)}{q-2}},
		\end{align*}
		which, upon rearranging, yields the claim.
	\end{proof}

\section{A refined $L^4$ estimate}
\label{s:L4}

Now we specialize to the one-dimensional setting $d = 1$. We are particularly 
interested in the Strichartz exponents
\[
(q_0, r_0) = \Bigl( \fr{ 7 + \sqrt{33} }{2}, \fr{5 + \sqrt{33}}{2} \Bigr)
\]
determined by the conditions
$\tfr{2}{q_0} + \tfr{1}{r_0} = \tfr{1}{2}$ and $q_0-1 = r_0$. Note that $5 < r_0 < 
6$.
	
	Suppose $\|f\|_{L^2} = A
	$ and that $u = U(t, 0) f$ satisfies
	$\| u \|_{L^6_{t,x} ( [-\delta_0, \delta_0] 
		\times \mf{R})} = \varepsilon$.
	Using the inequality $\|u\|_{L^6_{t,x}} \le \| u\|_{L^5_t 
	L^{10}_x}^{1-\theta} \| 
	u\|_{L_t^{q_0} L_x^{r_0}}^\theta$ for some $0 < \theta < 1$, estimating the 
	first factor by Strichartz, and applying 
	Proposition~\ref{p:inv_hls}, we find a time interval $J = [t_0 - \lambda^2, 
	t_0 + \lambda^2]$ such that
\[
\| u \|_{L^{q_0-1}_t L^{r_0}_x ( J \times \mf{R})} \gtrsim A
|J|^{\fr{1}{q_0(q_0-1)}} \bigl ( \tfrac{\varepsilon}{A} \bigr )^{
	\fr{q_0}{\theta(q_0-2)}}.
\]
Setting
\[
u(t, x) = \lambda^{-1/2} u_\lambda (\lambda^{-2} (t-t_0),
\lambda^{-1} x),
\]
we get 
\[
i\partial_t u_\lambda = (-\tfr{1}{2} \partial^2_x + V_\lambda
)u_\lambda = 0 \quad\text{with}\quad u_\lambda(0, x) = \lambda^{1/2} u(t_0, \lambda x)
\]
and $V_\lambda(t, x) = \lambda^2 V(t_0 + \lambda^2 t, \lambda x)$ satisfies the hypotheses~\eqref{e:V_h1} and~\eqref{e:V_h2} for 
all
$0 < \lambda \le 1$. By the corollary and a change of variables,
\[
\| u_\lambda \|_{L^{q_0-1}_{t,x} ([-1, 1] \times \mf{R})}
\gtrsim A (\tfr{\varepsilon}{A})^{\fr{q_0}{\theta(q_0-2)}}.
\]

As $4 < q_0-1 < 6$, Theorem~\ref{t:inv_str} will follow by interpolating
between the $L^2_x \to L^6_{t,x}$ Strichartz estimate and the following
$L^2_x \to L^4_{t,x}$ estimate. Recall that $\psi$ is the test 
function 
fixed in the introduction.
\begin{prop}\label{P:4.1}
	Let $V$ be a potential satisfying the hypotheses~\eqref{e:V_h1} 
	and~\eqref{e:V_h2}, and denote by $U_V(t, s)$ the linear propagator. There 
	exists $\delta_0 > 0$ so that if $\eta \in C^\infty_0( (-\delta_0, 
	\delta_0))$, 
	\label{p:refined_L4}
	\[
	\|U_V(t, 0) f\|_{L^4_{t,x}(\eta(t) dx dt)} \lesssim \|f\|_{2}^{1-\beta}
	\sup_z | \langle \psi_z, f \rangle |^{\beta}
	\]
	for some absolute constant $0 < \beta < 1$. 
\end{prop}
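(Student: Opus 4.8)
\textbf{Proof proposal for Proposition~\ref{P:4.1}.}

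The plan is to expand $\|U_V(t,0)f\|_{L^4_{t,x}(\eta\,dx\,dt)}^4$ as the square of the $L^2_{t,x}$ norm of $(U_V(t,0)f)^2$ and unfold this into a sum over pairs of wavepackets. Writing $f = T^*Tf = \int \langle f,\psi_{z_0}\rangle \psi_{z_0}\,dz_0$ and using Lemma~\ref{l:galilei} to describe each evolved wavepacket $U_V(t,0)\psi_{z_0}$ as a modulated, translated solution of a purely quadratic-potential equation, we have $U_V(t,0)f(x) = \int c(z_0)\, e^{i\alpha(t,0,z_0)}\,e^{i(x-x_0^t)\xi_0^t}\,(U^{z_0}(t,0)\psi)(x-x_0^t)\,dz_0$ with $c(z_0) = \langle f,\psi_{z_0}\rangle$. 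Then
\[
\|U_V(t,0)f\|_{L^4_{t,x}(\eta\,dx\,dt)}^4 = \int \eta(t)^2 \Bigl| \iint c(z_0)\overline{c(z_1)}\, e^{i[\alpha(t,0,z_0)-\alpha(t,0,z_1)]}\, w_{z_0}(t,x)\overline{w_{z_1}(t,x)} \, dz_0\,dz_1 \Bigr|^2 dx\,dt,
\]
where $w_{z}(t,x) = e^{i(x-x^t)\xi^t}(U^{z}(t,0)\psi)(x-x^t)$ is a normalized bump localized near $x^t$ in space and $\xi^t$ in frequency. The goal is to show this is bounded by $\|f\|_2^{4-4\beta}\,(\sup_z|c(z)|)^{4\beta}$, i.e.\ to extract a power of $\sup|c(z)|$ from the bilinear/quadrilinear expression while keeping the remaining powers controlled by $\|f\|_2^2 = \iint |c(z_0)|^2\,dz_0$.

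The core of the argument is an orthogonality estimate: for most quadruples $(z_0,z_1,z_0',z_1')$ the four wavepackets cannot all overlap simultaneously in phase space over the time window $\mathrm{supp}\,\eta$, because the bicharacteristics separate. This is exactly what Lemma~\ref{l:collisions} and Lemma~\ref{l:technical_lma} are set up to quantify: two bicharacteristics with relative position $\lesssim r$ and relative momentum $\mu$ can only stay within distance $Cr$ of each other for a time $\lesssim r/\mu$, and during such an encounter their relative momentum is essentially frozen. Dyadically decomposing in the relative momentum $|\xi_0^{t}-\xi_1^{t}|$ (which is comparable to $|\xi_0-\xi_1|$ up to the allowed errors) and in the spatial separation, one estimates the contribution of each dyadic piece by: (i) integrating in $x$ and $t$ first, using the spatial localization of $w_{z_0}\overline{w_{z_1}}$ and the oscillation of the phase $e^{i(x-x_0^t)\xi_0^t}e^{-i(x-x_1^t)\xi_1^t}$ to gain decay via integration by parts in $x$ when $|\xi_0^t-\xi_1^t|$ is large; (ii) using Lemma~\ref{l:technical_lma} to control the measure of the set of $z_0$ (resp.\ $z_1$) whose bicharacteristics enter a fixed phase-space cube during the relevant subinterval, which converts the $z$-integrations into a bounded overlap count; and then (iii) applying Schur's test / Cauchy--Schwarz in $z_0, z_1$ to reduce to $\iint|c(z_0)|^2|c(z_1)|^2$, from which one factor of $\sup|c(z)|^2$ can be pulled out against one factor of $\int|c(z_0)|^2$. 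Summing the dyadic pieces yields a geometric series (this is where $r_0<6$, equivalently the exponent $4$ being an even integer, is used — the unfolding into a genuine multilinear integral is what makes the integration by parts and bicharacteristic bookkeeping tractable), giving the claimed bound with some explicit $\beta>0$.

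The main obstacle, as flagged in the text around Lemma~\ref{l:technical_lma}, is the bookkeeping of the bicharacteristic flow over a whole time interval rather than at a single instant: a pair of wavepackets that are far apart at time $t_0$ may collide at some other time in $\mathrm{supp}\,\eta$, so the naive ``frozen-time'' orthogonality fails and one must organize the estimate around \emph{when} each encounter happens. Handling this requires partitioning $\mathrm{supp}\,\eta$ according to the encounter time and spatial scale of each pair, and ensuring the errors in $V^{z_0}$ (which is only quadratic after the Galilean reduction, with curvature $\|\partial_x^2 V\|_{L^\infty}$-bounded by Lemma~\ref{l:galilei}) and in the $\langle x\rangle^{-1-\varepsilon}$ decay of $\partial_x^3 V$ do not destroy the phase-space localization of $U^{z_0}(t,0)\psi$ — this last point is precisely why hypothesis~\eqref{e:V_h2} is imposed. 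A secondary technical nuisance is that $w_z(t,\cdot)$ is only Schwartz-localized, not compactly supported, so the dyadic decomposition in spatial separation produces tails that must be absorbed using the rapid decay of $\psi$ and of the kernel perturbation $a-1$ from Theorem~\ref{t:propagator}.
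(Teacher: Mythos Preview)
Your proposal captures the overall architecture correctly --- wavepacket expansion, quadrilinear unfolding, bicharacteristic bookkeeping via Lemmas~\ref{l:collisions} and~\ref{l:technical_lma}, then Schur/Cauchy--Schwarz to extract $\sup_z|\langle f,\psi_z\rangle|$. But there is a genuine gap in the mechanism you propose for decay of the kernel.

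You only invoke integration by parts in $x$, which at the quadrilinear level gives decay in the \emph{momentum resonance} $\xi_1+\xi_2-\xi_3-\xi_4$, together with the spatial localization/time-window bound $\frac{1}{1+|\xi_1-\xi_2|+|\xi_3-\xi_4|}$. If you feed only this into the Schur test and sum over relative momenta $(\mu_1,\mu_2)$ as in the paper's scheme, the sum over $\nu=\mu_1-\mu_2$ behaves like $\sum_\nu |\nu|/(1+|\nu|)^{1-\delta}$ in the regime $|\nu|\gg|\xi_3-\xi_4|$ and diverges. What is missing is exploitation of the \emph{temporal} oscillation of the phase --- the action integrals $\int_0^t \tfrac12|\xi_j^\tau|^2 - V\,d\tau$ --- which encode an \emph{energy resonance} $(\xi_1-\xi_2)^2-(\xi_3-\xi_4)^2$. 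The paper extracts this by integrating by parts along the ``center of mass'' vector field $D=\partial_t+\overline{\xi^t}\partial_x$ (not bare $\partial_t$, which would produce uncontrolled factors of $\overline{\xi^t}$), yielding the second alternative in Lemma~\ref{l:K0_ptwise},
\[
\frac{1+|\xi_1-\xi_2|+|\xi_3-\xi_4|}{\bigl|(\xi_1-\xi_2)^2-(\xi_3-\xi_4)^2\bigr|^{2}}.
\]
It is precisely this bound that closes the Schur sum in the regime where $|\mu_1-\mu_2|$ is large; the hypothesis~\eqref{e:V_h2} on $\partial_x^3 V$ enters here (Lemma~\ref{l:technical_lma_2}) to control the error terms produced by $D^2\Phi$, not merely to preserve wavepacket localization as you suggest. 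Relatedly, your discussion is phrased at the bilinear level (pairs $z_0,z_1$), but both resonance functions are genuinely quadrilinear; the analysis must be carried out on the full kernel $K(z_1,z_2,z_3,z_4)$ of~\eqref{e:kernel_def}.
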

Note that this estimate is trivial if the right side is replaced by 
$\|f\|_2$ since $L^4_{t,x}$ is controlled by $L^2_{t,x}$ 
and $L^{6}_{t,x}$, which on a compact time interval are bounded above by $\|f\|_2$ by unitarity and Strichartz, respectively.

	\subsection{Proof of Proposition~\ref{p:refined_L4}}
	\label{s:L4_estimate}

	We fix the potential $V$ and drop the subscript $V$ from the
	propagator. It suffices to prove the proposition for $f\in \mathcal S(\R)$.  Decomposing $f$
	into wavepackets $f = \int_{T^* \mf{R}} \langle f, \psi_z \rangle \psi_z \, dz$ and
	expanding the $L^4_{t,x}$ norm, we get
	\[
	\begin{split}
	\| U(t,0) f\|^4_{L^4_{t, x}(\eta(t) dx dt)}  \le \int_{ (T^* \mf{R})^4} K(z_1, z_2, z_3, 
	z_4)
	\prod_{j=1}^4 |\langle f, \psi_{z_j} \rangle| \, dz_1 dz_2 dz_3 dz_4,
	\end{split}
	\]
	where
	\begin{equation}
	\label{e:kernel_def}
	K (z_1,z_2,z_3,z_4):= |\langle U(t,0)( \psi_{z_1}) U(t,0)(
	\psi_{z_2}), U(t,0) ( \psi_{z_3}) U(t,0) ( \psi_{z_4})
	\rangle_{L^2_{t,x}( \eta(t) dx dt)}|.
	\end{equation}
	There is no difficulty with interchanging the order of integration
	as $f$ was assumed to be Schwartz. We claim
	\begin{prop}
		\label{p:L2_kernel_bound}
		For some $0 < \theta < 1$
		the kernel
		\[
		K(z_1, z_2, z_3, z_4) \max (\langle z_1 - z_2 \rangle^\theta,  \langle
		z_3 - z_4 \rangle^\theta)
		\]
		is bounded as a map on $L^2(T^* \mf{R} \times T^* \mf{R})$.
	\end{prop}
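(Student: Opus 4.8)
The plan is to estimate the kernel $K(z_1,z_2,z_3,z_4)$ pointwise by exploiting the wavepacket description of $U(t,0)\psi_z$ furnished by Lemma~\ref{l:galilei} together with the dynamical estimates of Lemma~\ref{l:collisions} and the geometric containment in Lemma~\ref{l:technical_lma}. By Lemma~\ref{l:galilei}, each factor $U(t,0)\psi_{z_j}(x)$ is, up to a unimodular phase, a function concentrated near the bicharacteristic $z_j^t = (x_j^t,\xi_j^t)$, oscillating in $x$ at frequency $\xi_j^t$. Thus $U(t,0)\psi_{z_1}\overline{U(t,0)\psi_{z_3}}$ is supported (up to rapidly decaying tails) where $|x-x_1^t|\lesssim 1$ and $|x-x_3^t|\lesssim 1$, forcing $|x_1^t-x_3^t|\lesssim 1$, and carries a spatial oscillation at frequency $\xi_1^t - \xi_3^t$; similarly for the pair $(z_2,z_4)$. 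Integrating in $x$ for each fixed $t$ and using nonstationary phase (integration by parts in $x$) gains a factor $\langle \xi_1^t-\xi_3^t+\xi_2^t-\xi_4^t\rangle^{-N}$, while the constraint that all four trajectories be within $O(1)$ of one another at time $t$ localizes the $t$-integration. This is the standard ``interaction'' mechanism; the novelty is that we track it along curved bicharacteristics rather than straight lines.

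The key steps, in order: \textbf{(1)} Using Lemma~\ref{l:galilei} and the Schwartz bounds on $U^{z_0}(t,s)$ uniform in $z_0$, establish the pointwise bound
\[
|U(t,0)\psi_{z}(x)| \lesssim_N \langle x - x^t\rangle^{-N}
\]
together with derivative bounds, so that $U(t,0)\psi_{z_1}\overline{U(t,0)\psi_{z_3}}$ is a symbol of order zero localized near $x_1^t$, oscillating at frequency $\xi_1^t-\xi_3^t$, and of size $\lesssim_N \langle x_1^t-x_3^t\rangle^{-N}$. \textbf{(2)} For fixed $t$, integrate in $x$: by nonstationary phase and the support localization,
\[
\Bigl|\int \cdots\, dx\Bigr| \lesssim_N \langle x_1^t-x_3^t\rangle^{-N}\langle x_2^t-x_4^t\rangle^{-N}\langle \xi_1^t-\xi_2^t-\xi_3^t+\xi_4^t\rangle^{-N}.
\]
\textbf{(3)} Integrate in $t$ over $\opn{supp}\eta$. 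Here Lemma~\ref{l:collisions} shows that along the flow the relative velocities $\xi_1^t-\xi_3^t$ and $\xi_2^t-\xi_4^t$ are essentially frozen during any interaction, so the $t$-window on which $|x_1^t-x_3^t|\lesssim 1$ has length $\lesssim \min(\langle\xi_1-\xi_3\rangle^{-1},\delta_0)$ — and likewise for the other pair — yielding an integrable gain. Combining with step (2), bound $K$ by a product of negative powers of $\langle z_1-z_3\rangle$-type and $\langle z_2-z_4\rangle$-type quantities. \textbf{(4)} Convert this into the claimed factor $\max(\langle z_1-z_2\rangle^\theta, \langle z_3-z_4\rangle^\theta)$: by the triangle inequality, if both $|z_1-z_3|$ and $|z_2-z_4|$ are controlled, then $\langle z_1-z_2\rangle \lesssim \langle z_3-z_4\rangle$ modulo the other factors, so one of $\langle z_1-z_2\rangle^\theta$, $\langle z_3-z_4\rangle^\theta$ is absorbed by spare decay; the remaining kernel is a bounded integral operator on $L^2((T^*\R)^2)$ by Schur's test, since it decays in $z_1-z_3$, $z_2-z_4$, and the ``output frequency'' $\xi_1-\xi_2-\xi_3+\xi_4$, which together pin down three of the four variables relative to the fourth.

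The main obstacle is step (3): controlling the $t$-integral uniformly as the four bicharacteristics weave past one another. Along curved trajectories one cannot simply parametrize interaction times by a linear relation; one must use Lemma~\ref{l:collisions} to show that each pair interacts in essentially one short window and that relative momenta do not drift during it, and then organize the $t$-integration so that the decay in the ``nondegenerate'' frequency difference $\xi_1^t-\xi_2^t-\xi_3^t+\xi_4^t$ survives — this is where the geometric containment of Lemma~\ref{l:technical_lma} is needed to pass between the value of a phase-space quantity at the interaction time and at a fixed reference time. A secondary subtlety is that $\langle\xi_i^t-\xi_j^t\rangle$ may differ from $\langle\xi_i-\xi_j\rangle$ at $t=0$; Lemma~\ref{l:collisions} again controls this, but one must be careful that the loss is bounded uniformly in all parameters so that the final Schur-test estimate holds with a single exponent $\theta$.
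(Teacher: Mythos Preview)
Your proposal has a genuine gap: the decay you extract is not enough to close the Schur test. After correcting the sign (the spatial phase derivative is $\xi_1^t+\xi_2^t-\xi_3^t-\xi_4^t$, not $\xi_1^t-\xi_2^t-\xi_3^t+\xi_4^t$), your steps (1)--(3) yield, at best,
\[
K(\vec z)\lesssim_N \langle \xi_1+\xi_2-\xi_3-\xi_4\rangle^{-N}\,\min\bigl(\langle\xi_1-\xi_3\rangle^{-1},\langle\xi_2-\xi_4\rangle^{-1}\bigr)
\]
together with the constraint that the trajectory of $z_1$ meets that of $z_3$ (and $z_2$ meets $z_4$) somewhere in $[-\delta_0,\delta_0]$. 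Now fix $(z_3,z_4)$ and try to integrate in $(z_1,z_2)$. On the set $\xi_1+\xi_2\approx\xi_3+\xi_4$ one has $\xi_1-\xi_3\approx-(\xi_2-\xi_4)$, so the collision constraint lets $x_1$ range over an interval of length $\sim\langle\xi_1-\xi_3\rangle$, and likewise $x_2$; together with your single factor $\langle\xi_1-\xi_3\rangle^{-1}$ this leaves a net $\langle\xi_1-\xi_3\rangle$. Integrating this over the remaining free variable $\xi_1-\xi_2$ diverges. In short, you have accounted for three of the four degrees of freedom in $(z_1,z_2)$ but not the fourth; the direction $\xi_1-\xi_2$ (equivalently, $|\xi_1-\xi_2|-|\xi_3-\xi_4|$) is unconstrained by momentum conservation and by the collision-window length alone.

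The paper closes exactly this gap by exploiting \emph{temporal} oscillation, not merely the length of the interaction window. One integrates by parts in $t$ using the transport field $D=\partial_t+\overline{\xi^t}\partial_x$ adapted to the average bicharacteristic; the resulting phase derivative is
\[
D\Phi\sim\tfrac12\bigl|(\xi_1-\xi_2)^2-(\xi_3-\xi_4)^2\bigr|,
\]
an ``energy conservation'' factor that is transverse to the momentum factor $\xi_1+\xi_2-\xi_3-\xi_4$. Two such integrations by parts produce the second alternative in Lemma~\ref{l:K0_ptwise}, and it is precisely this alternative that makes the Schur sum converge in the regime where $|\mu_1-\mu_2|$ (i.e.\ $|\xi_1-\xi_2|$) is large. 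Your step (3) uses only the size of the support in $t$, which cannot detect this oscillation; any successful argument must recover an estimate of this type.
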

	Let us first see how this proposition
	implies the previous one. Writing $a_z = | \langle f, \psi_z
	\rangle|$, we have
	\[
	\begin{split}
	\|U(t,0) f\|_{L^4_{t, x}(\eta(t) dx dt)}^4 &\lesssim \Bigl( \int_{(T^* \mf{R})^2}
	a_{z_1}^2 a_{z_2}^2 \langle z_1 - z_2 \rangle^{-2\theta} \, dz_1
	dz_2 \Bigr)^{1/2} \Bigl( \int_{ (T^* \mf{R})^2} a_{z_3}^2 a_{z_4}^2
	\, dz_3 dz_4
	\Bigr)^{1/2}\\
	&\lesssim \|f\|_{L^2}^2 \Bigl( \int_{(T^* \mf{R})^2} a_{z_1}^2
	a_{z_2}^2\langle z_1 - z_2 \rangle^{-2\theta} \, dz_1 dz_2
	\Bigr)^{1/2}.
	\end{split}
	\]
	By Young's inequality, the convolution kernel $k(z_1, z_2) = \langle z_1 - 
	z_2 \rangle^{-2\theta}$ is bounded from $L^p_z$ to $L^{p'}_{z}$ for
	some $p \in (1, 2) $, and the integral on the right is bounded by
	\[
	\Bigl(\int_{T^* \mf{R} } a_{z}^{2p} \, dz
	\Bigr)^{2/p} \le \| f\|_{L^2}^{4/p} \sup_{z} a_z^{4/p'}.
	\]
	This yields
	\[
	\|U(t,0) f\|_{L^4_{t, x}(\eta(t) dx dt)} \lesssim \| f\|_{L^2}^{\fr{1}{2} + \fr{1}{2p}}
	\sup_z a_z^{\fr{1}{2p'}},
	\]
	which settles Proposition~\ref{P:4.1} with $\beta=\fr{1}{2p'}$.
	
	It remains to prove Proposition~\ref{p:L2_kernel_bound}. 
	Lemma~\ref{l:galilei} implies that $	U(t, 0)(\psi_{z_j})(x) = 
	e^{i\alpha_j} [U_j(t, 0) \psi](x - x_j^t)$, 
	where
	\[
	\alpha_j(t, x) = (x - x_j^t) \xi_j^t + \int_0^t \fr{1}{2}
	|\xi_j^\tau|^2 - V(\tau, x_j^\tau) \, d\tau
	\]
	and $U_j$ is the propagator for $H_j = -\tfr{1}{2}\partial_x^2 + V_j(t,
	x)$ with
	\begin{equation}
	\label{e:Vj}
	V_j(t, x) = x^2 \int_0^1 (1-s)\partial^2_x V (t, x_j^t + sx) \, ds.
	\end{equation}
	The envelopes $[U_j(t, 0) \psi] ( x - x_j^t)$ concentrate along
	the classical trajectories $t \mapsto x_j^t$:
	\begin{equation}
	\label{e:wavepacket_schwartz_tails}
	|\partial_x^k [U_j(t, 0) \psi] (x - x_j^t)| \lesssim_{k, N}
	\langle x- x_j^t \rangle^{-N}.
	\end{equation}
	
		The kernel $K$ therefore admits the crude bound
	\[
	K(\vec{z}) \lesssim_N \int \prod_{j=1}^4 \langle x - x_j^t
	\rangle^{-N}  \,  \eta(t) dx dt \lesssim \max ( \langle z_1 - z_2
	\rangle, \langle z_3 - z_4 \rangle)^{-1},
	\]
	and Proposition~\ref{p:L2_kernel_bound} will follow from
	\begin{prop}
		\label{p:L2_kernel_bound_delta}
		For $\delta > 0$ sufficiently small, the operator with kernel 
		$K^{1-\delta}$ is
		bounded on $L^2(T^* \mf{R} \times T^* \mf{R})$.
	\end{prop}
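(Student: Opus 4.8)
The plan is to exploit the geometric decay of the kernel $K$ along bicharacteristics together with a Schur-test argument. Since $K(\vec z) \le 1$ (being an $L^2_{t,x}$ inner product of $L^2$-normalized wavepackets, by Cauchy--Schwarz and the unitarity of each $U_j$ combined with the cutoff $\eta$), any bound of the form $K^{1-\delta} \le K^{1-\delta}$ loses nothing; the content is that $K$ itself decays enough in the *separation* of the two pairs $(z_1,z_2)$ and $(z_3,z_4)$ to make $K^{1-\delta}$ Schur-testable. Concretely, I would first upgrade the crude bound $K(\vec z) \lesssim \max(\langle z_1-z_2\rangle, \langle z_3-z_4\rangle)^{-1}$ to a genuine *off-diagonal* decay estimate: by Lemma~\ref{l:collisions} and the non-stationary-phase analysis of the oscillatory factor $e^{i(\alpha_1+\alpha_2-\alpha_3-\alpha_4)}$, one should be able to show
\[
K(\vec z) \lesssim_N \langle \operatorname{dist}(\{z_1,z_2\},\{z_3,z_4\})\rangle^{-N} \cdot \min(\langle z_1-z_2\rangle,\langle z_3-z_4\rangle)^{-1}
\]
for every $N$, where the distance measures how far the centers of the two ``incoming'' wavepackets sit from those of the two ``outgoing'' ones at the interaction time. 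The key dynamical input is that two bicharacteristics with large relative velocity separate linearly in time (first bullet of Lemma~\ref{l:collisions}) and their relative velocity stays essentially frozen during the brief overlap (second bullet), so integration by parts in $t$ and $x$ against the Schwartz envelopes \eqref{e:wavepacket_schwartz_tails} gains arbitrary powers of the frequency gap, while the spatial separation is controlled directly by the envelope decay.

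Granting such an estimate, the boundedness of the operator with kernel $K^{1-\delta}$ on $L^2((T^*\mf R)^2)$ follows from the Schur test applied in the ``paired'' variables $w=(z_1,z_2)$ and $w'=(z_3,z_4)$ in $(T^*\mf R)^2 \cong \mf R^4$. Writing $K^{1-\delta}(w,w') \lesssim_N \langle \operatorname{dist}(w,w')\rangle^{-N(1-\delta)} \langle z_1-z_2\rangle^{-(1-\delta)} \wedge \langle z_3 - z_4\rangle^{-(1-\delta)}$, I would bound the row and column integrals: fixing $w$, the integral $\int K^{1-\delta}(w,w')\,dw'$ splits into an integral over the location of the pair $\{z_3,z_4\}$ relative to $\{z_1,z_2\}$ (convergent by the $\langle\operatorname{dist}\rangle^{-N(1-\delta)}$ factor once $N(1-\delta) > 4$) and an integral over the internal separation $z_3-z_4$ (convergent once $(1-\delta)$ exceeds the ambient dimension $2$, i.e. $\delta < 1 - \tfrac12$… but here one only needs it on the two-dimensional slice of the *difference* $z_3-z_4$, so $(1-\delta)\cdot(\text{power}) > 2$). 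The precise bookkeeping of which exponents must beat which dimension is the routine part; the upshot is that for $\delta$ small enough both integrals converge, uniformly in $w$, and by symmetry of $K$ the same holds for the column integral.

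The main obstacle I anticipate is establishing the off-diagonal decay in the ``distance between pairs'' variable, as opposed to merely the internal separations $z_1-z_2$, $z_3-z_4$ that the crude bound already supplies. When all four wavepackets pass near a common point in phase space at a common time, there is genuine overlap and $K$ is of size $\sim 1$; the decay only kicks in when the pair $\{z_1,z_2\}$ is spatially or in-frequency far from $\{z_3,z_4\}$ \emph{throughout} the support of $\eta$. Handling this requires carefully tracking the four trajectories: if the two pairs are far apart in \emph{space} at time $0$ they may still collide at a later time, so one must use the velocity information, and conversely separation in \emph{frequency} forces eventual spatial separation but one must quantify the waiting time --- precisely the content of Lemma~\ref{l:technical_lma} and Lemma~\ref{l:collisions}. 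A clean way to organize this is to perform a change of variables in $z_3,z_4$ sending them to their configuration at the interaction time (the Jacobian of $\Phi(t,0)$ is $1$ by Liouville), reducing to the case where the comparison is made at a single time slice, and then to run a stationary-phase argument in $t$ using that $\partial_t(\alpha_1+\alpha_2-\alpha_3-\alpha_4)$ is bounded below by the frequency gap while higher $t$-derivatives and all $x$-derivatives are controlled by the subquadratic hypotheses \eqref{e:V_h1}--\eqref{e:V_h2}. I expect this to be where the decay hypothesis on $\partial_x^3 V$ in \eqref{e:V_h2} is genuinely used, exactly as the remark before Lemma~\ref{l:technical_lma_2} foreshadows.
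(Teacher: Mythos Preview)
Your high-level strategy (pointwise kernel bound followed by Schur's test in the paired variables) matches the paper's, but the execution has a genuine gap that you yourself flag and then wave away.  The Schur integral you write down does \emph{not} split into independent ``location'' and ``internal separation'' integrals: the factor $\min(\langle z_1-z_2\rangle,\langle z_3-z_4\rangle)^{-(1-\delta)}$ carries exponent strictly less than~$1$, while $z_3-z_4$ ranges over the two-dimensional space $T^*\mf{R}$, so by itself it is not integrable --- your parenthetical ``$(1-\delta)\cdot(\text{power})>2$'' has no source for the missing power, since on the diagonal $\{z_1,z_2\}=\{z_3,z_4\}$ the kernel genuinely behaves like $\langle z_1-z_2\rangle^{-1}$ and no better.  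More fundamentally, the decay of $K$ is \emph{not} governed by any naive distance between the unordered pairs at time $0$: four trajectories that are far apart initially may all collide at a later time, and conversely near-resonant configurations (those with $\xi_1+\xi_2\approx\xi_3+\xi_4$ and $|\xi_1-\xi_2|\approx|\xi_3-\xi_4|$ at the collision time) give $K$ of order $\langle z_1-z_2\rangle^{-1}$ regardless of the positions at $t=0$.

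What the paper actually does is identify, for each $\vec z$, the collision time $t(\vec z)$ and prove decay in the \emph{resonance functions} $\xi_1^{t}+\xi_2^{t}-\xi_3^{t}-\xi_4^{t}$ and $(\xi_1^{t}-\xi_2^{t})^2-(\xi_3^{t}-\xi_4^{t})^2$ there (Lemma~\ref{l:K0_ptwise}); the first comes from integration by parts in $x$, the second from integration by parts along the carefully chosen vector field $D=\partial_t+\overline{\xi^t}\partial_x$, and neither is a ``distance.''  The Schur test is then run not over $(z_3,z_4)$ directly but over a foliation $Z_{\mu_1,\mu_2}$ of the fiber by relative momenta at the collision time, whose measures are controlled via Liouville's theorem (Lemma~\ref{l:measure}).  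The resulting sum over $(\mu_1,\mu_2)\in\mf{Z}^2$ converges only because of the specific algebraic form of the bound in Lemma~\ref{l:K0_ptwise} --- one branch has $1+|\xi_1-\xi_2|+|\xi_3-\xi_4|$ in the denominator, the other in the numerator, and these combine with the factor $|I|\sim(1+|\xi_3-\xi_4|)^{-1}$ from the measure estimate.  Your ``non-stationary phase'' gesture does not come close to producing this structure; the $D$-integration-by-parts and the $Z_{\mu_1,\mu_2}$ bookkeeping are the missing ideas.
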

	\begin{proof}
		We partition the 4-particle phase space $(T^* \mf{R})^4$ according to 
		the degree of interaction between the particles. Define
		\[
		\begin{split}
		E_0 &=\{ \vec{z} \in (T^* \mf{R})^4 : \min_{|t| \le \delta_0} \max_{k, 
			k'} |x_k^t - x_{k'}^t|
		\le 1\}\\
		E_m &= \{ \vec{z} \in (T^* \mf{R})^4 : 2^{m-1} < \min_{|t| \le 
			\delta_0} \max_{k, k'} |x_k^t - x_{k'}^t|
		\le 2^m \}, \ m \ge 1,
		\end{split}
		\]
		and decompose
		\[
		K = K \mr{1}_{E_0} + \sum_{m \ge 1} K \mr{1}_{E_m} = K_0 + \sum_{m \ge 
			1} K_m.
		\]
		Then
		\[
		K^{1-\delta} = K_0^{1-\delta} + \sum_{m \ge 1} K_m^{1-\delta}.
		\]
		
		The $K_0$ term heuristically corresponds to the 4-tuples of 
		wavepackets 
		that all
		collide at some time $t \in [-\delta_0, \delta_0]$ and will be the 
		dominant term thanks to the 
		decay 
		in~\eqref{e:wavepacket_schwartz_tails}.  We will show that for any $m\geq 0$ and any $N > 0$,
		\begin{equation}
		\label{e:Km_L2_bound}
		\|K_m^{1-\delta} \|_{L^2 \to L^2} \lesssim_N 2^{-mN},
		\end{equation}
		which immediately implies the proposition upon summing. In turn, 
		this 
		will be a consequence of the following pointwise bound: 
		
		\begin{lma}
			\label{l:K0_ptwise}
			For each $m\geq 0$ and $\vec{z} \in E_m$, let $t(\vec{z})$ be a time 
			witnessing the 
			minimum in the definition of $E_m$. Then for any $N_1, N_2 \ge 0$,
			\begin{equation}
			\nonumber
			\begin{split}
			&| K_m(\vec{z}) | \lesssim_{N_1,N_2} 2^{-mN_1} \min \biggl[\fr{|
				\xi^{t(\vec{z})}_1 + \xi^{t(\vec{z})}_2 - \xi^{t(\vec{z})}_3 - 
				\xi^{t(\vec{z})}_4|^{-N_2}}{1 + |\xi^{t(\vec{z})}_1 - 
				\xi^{t(\vec{z})}_2| +
				|\xi^{t(\vec{z})}_3 - \xi^{t(\vec{z})}_4|}, \fr{ 1 + 
				|\xi^{t(\vec{z})}_1 - \xi^{t(\vec{z})}_2| + |\xi^{t(\vec{z})}_3 
				- \xi^{t(\vec{z})}_4|}{ \bigl | (\xi^{t(\vec{z})}_1 -
				\xi^{t(\vec{z})}_2)^2 - (\xi^{t(\vec{z})}_3 - 
				\xi^{t(\vec{z})}_4)^2 \bigr|^2} \biggr].
			\end{split}
			\end{equation}
		\end{lma}
		Deferring the proof for the 
		moment, let us see how Lemma~\ref{l:K0_ptwise} implies \eqref{e:Km_L2_bound}.
		By Schur's test and symmetry, it suffices to show that
		\begin{align}
		\label{e:Km_schur}
		\sup_{z_3,z_4}\int K_m(z_1, z_2, z_3, z_4)^{1-\delta} \, dz_1 dz_2\lesssim_N 2^{-mN},
		\end{align}
		where the supremum is taken over all $(z_3, z_4)$ in the image of the projection
		$E_m \subset (T^* \mf{R})^4 \to T^*\mf{R}_{z_3} \times
		T^*\mf{R}_{z_4}$.  Fix such a pair $(z_3,z_4)$ and let
		\[
		E_m(z_3, z_4) = \{ (z_1, z_2) \in (T^*\mf{R})^2 : (z_1, z_2, z_3, z_4) 
		\in E_m\}.
		\]
		Choose $t_1 \in [-\delta_0, \delta_0]$
		minimizing $|x_3^{t_1} - x_4^{t_1}|$; the definition of $E_m$ implies 
		that
		$|x_3^{t_1} - x_4^{t_1}| \le 2^{m}$.
		
		Suppose $(z_1, z_2) \in E_m(z_3, z_4)$. By 
		Lemma~\ref{l:collisions}, any 
		``collision time" $t(z_1, z_2, z_3, z_4)$ must belong to the interval
		\[
		I = \bigl\{ t \in [-\delta_0, \delta_0] : |t - t_1| \lesssim \min\Bigl 
		(1,
		\fr{2^m}{|\xi_3^{t_1}
			- \xi_4^{t_1} |}\Bigr ) \bigr\},
		\]
		and for such $t$ one has
		\[
		\begin{split}
		|x_3^t - x_4^t| \lesssim 2^{m}, \quad		| \xi_3^t - \xi_4^t - 
		(\xi_3^{t_1} - \xi_4^{t_1})| \lesssim
		\min \Bigl ( 2^m, \fr{2^{2m}}{ |\xi_3^{t_1} - \xi_4^{t_1}|} \Bigr ).
		\end{split}
		\]		
		The contribution of each $(z_1, z_2) \in E_m(z_3, z_4)$ to the 
		integral~\eqref{e:Km_schur} will depend on their relative momenta 
		at the collision time. We organize the integration domain 
		$E_m(z_1, z_2)$ accordingly.
		
		Write $Q_{\xi} = (0, \xi) + [-1, 1]^2 \subset T^* \mf{R}$, and denote
		by $\Phi(t, s)$ the classical propagator for the Hamiltonian
		\[
		h = \tfrac{1}{2}|\xi|^2 + V(t, x).
		\]
		Using the shorthand $z^t = \Phi(t, 0)(z)$, for $\mu_1, \mu_2\in \R$ we define
		\[
		Z_{\mu_1, \mu_2} = \bigcup_{t \in I} (\Phi(t, 0) \otimes \Phi(t, 
		0))^{-1}
		\Bigl( \fr{z_3^t + z_4^t}{2} + 2^{m} Q_{\mu_1} \Bigr) \times \Bigl(
		\fr{z_3^t + z_4^t}{2} + 2^{m} Q_{\mu_2} \Bigr),
		\]
		where $\Phi(t, 0) \otimes \Phi(t, 0)(z_1, z_2) = (z_1^t, z_2^t)$ is
		the product flow on $T^* \mf{R} \times T^* \mf{R}$. This set is 
		depicted schematically in Figure~\ref{f:fig} when $m = 0$. This
		corresponds to 
		the pairs of wavepackets $(z_1, z_2)$ with momenta $(\mu_1, \mu_2)$ 
		relative to 
		the wavepackets $(z_3, z_4)$, when all four wavepackets interact. We have
		\[
		E_m(z_3, z_4) \subset \bigcup_{\mu_1, \mu_2 \in \mf{Z}} Z_{\mu_1, 
			\mu_2}.
		\]
		
		\begin{figure}
			\includegraphics[scale=1]{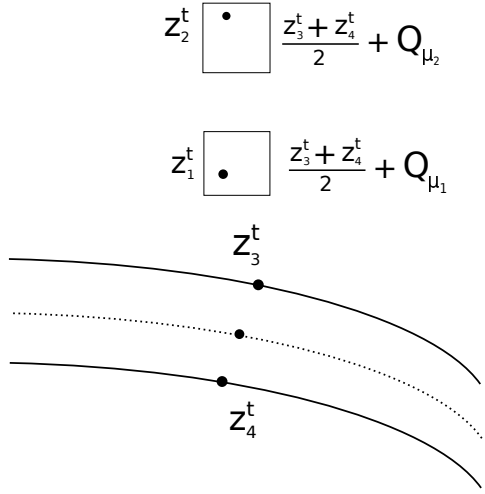}
			\caption{$Z_{\mu_1, \mu_2}$ comprises all $(z_1, z_2)$ such that 
			$z_1^t$ and $z_2^t$ belong to the depicted phase 
			space boxes for $t$ in the interval $\textrm{I}$.}
					\label{f:fig}
		\end{figure}
		\begin{lma}
			\label{l:measure}
			$|Z_{\mu_1, \mu_2}| \lesssim 2^{4m} \max(1,|\mu_1|, |\mu_2|) |I|$, 
			where
			$| \cdot |$ on the left denotes Lebesgue measure on $(T^* 
			\mf{R})^2$.
		\end{lma}
		
		\begin{proof}
			Without loss assume $|\mu_1| \ge |\mu_2|$. Partition the interval
			$I$ into
			subintervals of length $|\mu_1|^{-1}$ if $\mu_1\neq 0$ and in subintervals of length $1$ if $\mu_1=0$. For each $t'$ in the
			partition, Lemma~\ref{l:technical_lma} implies that for some 
			constant $C > 0$ we have
			\[
			\begin{split}
			\bigcup_{|t-t'| \le \min(1,|\mu_1|^{-1})} \Phi(t, 0)^{-1} \Bigl(
			\fr{z_3^t+z_4^t}{2} + 2^{m}Q_{\mu_1} \Bigr) &\subset \Phi(t',
			0)^{-1} \Bigl ( \fr{z_3^{t'} + z_4^{t'}}{2} + C2^{m} Q_{\mu_1}
			\Bigr)\\
			\bigcup_{|t-t'| \le \min(1,|\mu_1|^{-1})} \Phi(t, 0)^{-1} \Bigl(
			\fr{z_3^t+z_4^t}{2} + 2^{m}Q_{\mu_2} \Bigr) &\subset \Phi(t',
			0)^{-1} \Bigl ( \fr{z_3^{t'} + z_4^{t'}}{2} + C2^{m} Q_{\mu_2}
			\Bigr),
			\end{split}
			\]
			and so
			\[
			\begin{split}
			&\bigcup_{|t-t'| \le \min(1,|\mu_1|^{-1})} (\Phi(t, 0) \otimes \Phi(t, 
			0))^{-1}
			\Bigl( \fr{z_3^t + z_4^t}{2} + 2^{m} Q_{\mu_1} \Bigr) \times \Bigl(
			\fr{z_3^t + z_4^t}{2} + 2^{m} Q_{\mu_2} \Bigr)\\
			&\subset (\Phi(t', 0) \otimes \Phi(t', 0))^{-1} \Bigl ( 
			\fr{z_3^{t'} + z_4^{t'}}{2} + C2^{m} Q_{\mu_1}
			\Bigr) \times  \Bigl ( \fr{z_3^{t'} + z_4^{t'}}{2} + C2^{m} 
			Q_{\mu_2}
			\Bigr).
			\end{split}
			\]
			By Liouville's theorem, the right side has measure $O(2^{4m})$ in
			$(T^* \mf{R})^2$. The claim follows by summing over the partition.
		\end{proof}
		
		For each $(z_1, z_2) \in E_m(z_3, z_4) \cap Z_{\mu_1, \mu_2}$, suppose
		that $z_j^t \in \tfr{z_3^t + z_4^t}{2} + 2^m
		Q_{\mu_j}$ for some $t \in I$. As
		\[
		\xi_j^{t} = \fr{\xi_3^t + \xi_4^t}{2} + \mu_j + O(2^m), \ j = 1, 2,
		\]
		the second assertion of Lemma~\ref{l:collisions} implies that
		\begin{gather*}
		\xi_1^{t(\vec{z})} + \xi_2^{t(\vec{z})} - \xi_3^{t(\vec{z})} -
		\xi_4^{t(\vec{z})} = \mu_1 + \mu_2 + O(2^m)\\
		\xi_1^{t(\vec{z})} - \xi_2^{ t(\vec{z})} = \mu_1 - \mu_2 + O(2^m).
		\end{gather*}
		Hence by Lemma~\ref{l:K0_ptwise},
		\begin{equation}
		\nonumber
		\begin{split}
		| K_m | &\lesssim_{N} 2^{-3mN} \min \biggl[\fr{\langle
			\mu_1 + \mu_2 + O(2^m)\rangle^{-N}}{1 + \bigl | |\mu_1 - \mu_2| +
			|\xi^{t_1}_3 - \xi^{t_1}_4| + O(2^m) \bigr |}, \fr{ 1 + |\mu_1 - 
			\mu_2| +
			|\xi^{t_1}_3 - \xi^{t_1}_4| + O(2^m)}{ \bigl | (\mu_1 - \mu_2)^2 -
			(\xi^{t_1}_3 - \xi^{t_1}_4)^2  + O(2^{2m}) \bigr|^2} \biggr]\\
		&\lesssim_N 2^{(5-2N)m} \min\biggl[ \fr{ \langle \mu_1 + \mu_2
			\rangle^{-N}} {1 + |\mu_1 - \mu_2| + |\xi_3^{t_1} - \xi_4^{t_1}| },
		\fr{ 1 + |\mu_1 - \mu_2| + |\xi_3^{t_1} - \xi_4^{t_1}|}{ \bigl | (\mu_1 
			-
			\mu_2)^2 - (\xi_3^{t_1} - \xi_4^{t_1})^2 \bigr |^2} \biggr].
		\end{split}
		\end{equation}
		
				Applying Lemma~\ref{l:measure}, writing $\max(|\mu_1|, |\mu_2|) \le
		|\mu_1 + \mu_2| + |\mu_1 - \mu_2|$, and absorbing $|\mu_1 + \mu_2|$
		into the factor $\langle \mu_1 + \mu_2 \rangle^{-N}$ by adjusting $\delta$, 
		\[
		\begin{split}
		&\int K_m(z_1, z_2, z_3, z_4)^{1-\delta} \, dz_1 dz_2 \le
		\sum_{\mu_1, \mu_2 \in \mf{Z}} \int K_m(z_1, z_2, z_3, z_4)^{1-\delta}
		\mr{1}_{ Z_{\mu_1, \mu_2}}(z_1, z_2) \, dz_1 dz_2\\
		&\lesssim \sum_{\mu_1, \mu_2 \in \mf{Z}} 2^{-mN} \min\Bigl( \fr{ 
			\langle \mu_1 + \mu_2
			\rangle^{-N}} {1 + |\mu_1 - \mu_2| + |\xi_3^{t_1} - \xi_4^{t_1}| },
		\fr{ 1 + |\mu_1 - \mu_2| + |\xi_3^{t_1} - \xi_4^{t_1}|}{ \bigl | (\mu_1 
			-
			\mu_2)^2 - (\xi_3^{t_1} - \xi_4^{t_1})^2 \bigr |^2} \Bigr 
			)^{1-\delta}  \fr{1 + 
			|\mu_1 -
			\mu_2|}{1 + |\xi_3^{t_1} - \xi_4^{t_1}|}.
		\end{split}
		\]
		When $|\mu_1 - \mu_2| \le 1$, we choose the first term in the minimum 
		to see that the sum is of size $2^{-mN}$. If $|\mu_1-\mu_2|\geq\max(1, 2|\xi_3^{t_1} - \xi_4^{t_1}|)$, the above
		expression is bounded by
		\[
		\sum_{\mu_1, \mu_2 \in \mf{Z}} 2^{-mN} \min \Bigl ( \frac{\langle \mu_1 +
		\mu_2 \rangle^{-N}}{\langle\mu_1-\mu_2\rangle}, \fr{1}{\langle\mu_1-\mu_2\rangle^3}
		\Bigr )^{1-\delta} \langle\mu_1-\mu_2\rangle\lesssim_N 2^{-mN}.
		\]
	If instead $1\leq |\mu_1-\mu_2|\leq 2|\xi_3^{t_1} - \xi_4^{t_1}|$, we obtain the bound
	\[
		\sum_{\mu_1, \mu_2 \in \mf{Z}} 2^{-mN} \min \Bigl ( \frac{\langle \mu_1 +
		\mu_2 \rangle^{-N}}{\langle|\xi_3^{t_1} - \xi_4^{t_1}|\rangle}, \fr{1}{\bigl[|\mu_1-\mu_2|-|\xi_3^{t_1} - \xi_4^{t_1}|\bigr]^2}
		\Bigr )^{1-\delta} \lesssim_N 2^{-mN}.
		\]
		Therefore
		\[
		\int K_m (z_1, z_2, z_3, z_4)^{1-\delta} dz_1 dz_2 \lesssim_N 2^{-mN},
		\]
		which gives \eqref{e:Km_schur}. Modulo Lemma~\ref{l:K0_ptwise}, this completes 
		the
		proof of Proposition~\ref{p:L2_kernel_bound_delta}.
	\end{proof}

	\section{Proof of Lemma~\ref{l:K0_ptwise}}
	The spatial localization and 
	the definition of $E_m$
	immediately imply the cheap bound
	\[
	|K_m(\vec{z})| \lesssim_N 2^{-mN}.
	\]
	However, we can often do better by exploiting oscillation in 
	space and time. 
	As the argument
	is essentially the same for all $m$, we shall for simplicity take
	$m = 0$ in the sequel. We shall also assume that $t(\vec{z}) = 0$ as the 
	general case involves little more 
	than replacing all instances of $\xi$ in the sequel by $\xi^{t(\vec{z})}$.
	
	By Lemma~\ref{l:galilei},
	\begin{equation}
	\nonumber
	K_0(\vec{z}) = \Bigl | \iint e^{i\Phi} \prod_{j=1}^4 U_j(t,
	0) \psi (x - x_j^t) \, \eta(t) dx dt \Bigr |,
	\end{equation}
	where
	\[
	\Phi  = \sum_j \sigma_j \Bigl[ (x - x_j^t)\xi_j^t + \int_0^t
	\fr{1}{2}|\xi_j^\tau|^2 - V(\tau, x_j^\tau) \, d\tau \Bigr]
	\]
	 with $\sigma = (+, +, -, -)$ and
	 $\prod_{j=1}^4 c_j := c_1 c_2 \overline{c}_3 \overline{c}_4$. To save 
	 space we abbreviate $U_j(t, 0)$ as $U_j$.
	
	Let $1 =\sum_{\ell \ge 0} \theta_{\ell}$ be a partition 
	of
	unity such that $\theta_0$ is supported in the unit ball and
	$\theta_{\ell}$ is supported in the annulus
	$\{ 2^{\ell-1} < |x| < 2^{\ell + 1} \}$. Also choose
	$\chi \in C^\infty_0$ equal to $1$ on $|x| \le 8$.  Further bound
	$K_0 \le \sum_{\vec{\ell}} K_0^{\vec{\ell}}$, where
	\begin{equation}
	\nonumber
	K_0^{\vec{\ell}}(\vec z) = \Bigl | \iint e^{i\Phi} \prod_{j} U_j \psi(x -
	x_j^t) \theta_{\ell_j}(x - x_j^t) \, \eta(t)
	dx dt \Bigr |.
	\end{equation}
	
	Fix $\vec{\ell}$ and write $\ell^* = \max \ell_j$. By 
	Lemma~\ref{l:collisions}, the integrand is
	nonzero only in the spacetime region
	\begin{equation}
	\label{e:support}
	\{(t, x) : |t|
	\lesssim \min(1, \tfr{2^{\ell^*} }{ \max |\xi_j - \xi_k|})\quad\text{and}\quad  |x-x_j^t| 
	\lesssim 2^{\ell_j} \},
	\end{equation}
	and for all $t$ subject to the above restriction we have
	\begin{equation}
	\label{e:xi_constancy}
	|x_j^t - x_k^t | \lesssim 2^{\ell^*}\quad\text{and}\quad |\xi_j^t - \xi_k^t - (\xi_j-\xi_k)| \lesssim \min\bigl( 
	2^{\ell^*},	\tfrac{2^{2\ell^*}}{\max |\xi_j - \xi_k|}\bigr).
	\end{equation}

	We estimate $K_0^{\vec{\ell}}$ using integration by parts. The
	relevant derivatives of the phase function are
	\begin{equation}
	\partial_x \Phi = \sum_j \sigma_j \xi_j^t,	\qquad \partial^2_x
	\Phi = 0,	\qquad -\partial_t \Phi = \sum_j \sigma_j h(t, z_j^t) + \sum_j \sigma_j (x
	- x_j^t) \partial_x V (t, x_j^t).
		\end{equation}
	Integrating by parts repeatedly in $x$ and using \eqref{e:wavepacket_schwartz_tails}, for any $N \ge 0$, we get
	\begin{equation}
	\label{e:x-int_by_parts}
	\begin{split}
	|K_0^{\vec{\ell}}(\vec z)| &\lesssim_N \int |\xi_1^t +
	\xi_2^t - \xi_3^t - \xi_4^t|^{-N}
	\bigl | \partial_x^N \prod  U_j \psi (x - x_j^t) \theta_{\ell_j}
	(x - x_j^t) \bigr | \, \eta(t) dx dt \\
	&\lesssim_N \fr{2^{-\ell^* N} \langle \xi_1 + \xi_2 - \xi_3 - \xi_4
		\rangle^{-N} }{1 + |\xi_1 - \xi_2| + |\xi_3 - \xi_4|},
	\end{split}
	\end{equation}
	where we have used~\eqref{e:xi_constancy} to
	replace $ \xi_1^t + \xi_2^t - \xi_3^t
	- \xi_4^t$ with $\xi_1 + \xi_2 -
	\xi_3 - \xi_4 + O(2^{\ell^*})$.
	
	We can also exhibit gains from oscillation in time. Naively, one 
	might integrate by parts using the differential operator $\partial_t$, but 
	better decay 
	can be 
	obtained by accounting for 
	the bulk 
	motion of the wavepackets in addition to the phase. If one pretends that 
	the envelope $U_j \psi( x - x_j^t) ``\approx" \phi(x - 
	x_j^t)$ is simply transported along the classical trajectory, then 
	\begin{align*}
	(\partial_t + \xi_j^t \partial_x)U_j \psi(x - x_j^j) ``\approx" (-\xi_j^t 
	+ 
	\xi_j^t) \phi'(x-x_j^t) = 0.
	\end{align*}
	Motivated by this heuristic, we introduce a vector field adapted 
	to the 
	average bicharacteristic for the four wavepackets. This will have the 
	greatest effect when the wavepackets all follow nearby bicharacteristics; 
	when 
	they are far apart in phase space, 
	we can
	exploit the strong spatial localization and the fact 
	that 
	two 
	wavepackets 
	widely separated in momentum will interact only for a short time.
	
	Define 
	\begin{gather*}
	\overline{x^t} = \tfrac{1}{4} \sum_j x_j^t, \qquad \overline{\xi^t} = \tfrac{1}{4}\sum_j \xi_j^t, \qquad	x_j^t = \overline{x^t} + \overline{x^t}_j,  \qquad \xi_j^t = \overline{\xi^t} 
	+ \overline{\xi^t}_j.
	\end{gather*}
	The variables $(\overline{x^t}_j, \overline{\xi^t}_j)$ describe the 
	location of
	the $j$th wavepacket in phase space relative to the average 
	$(\overline{x^t}, 
	\overline{\xi^t})$; see Figure~\ref{f:fig2}.
	\begin{figure}
		\includegraphics[scale=0.7]{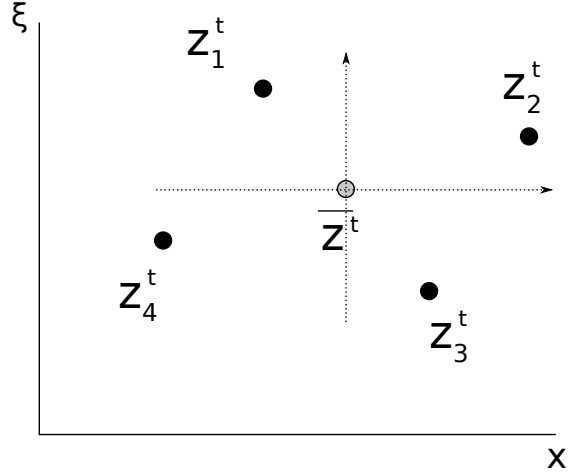}
		\caption{Phase space coordinates relative to the ``center of mass".}
				\label{f:fig2}
	\end{figure}
	We have
	\begin{equation}
	\label{e:cm_derivs}
	\begin{split}
	\frac{d}{dt} \overline{x^t}_j &= \overline{\xi^t}_j = O( \max_{j, k} 
	|\xi_j^t - 
	\xi_k^t|) = O\Bigl( |\xi_j - \xi_k| + \min\bigl(2^{\ell^*}, \tfrac{ 2^{2\ell^*}}{ 
		\max |\xi_j - \xi_k|} \bigr)\Bigr) \\
	\frac{d}{dt} 
	\overline{\xi^t}_j &= \tfrac{1}{4} \sum_k \partial_x V(t, x^t_k) - 
	\partial_x 
	V(t, x^t_j)\\
	&= \tfrac{1}{4} \sum_k  (x^t_k - x^t_j) \int_0^1 \partial^2_x V(t, 
	(1-\theta) x_j^t + \theta x_k^t) \, d\theta\\
	&= O(2^{\ell^*}).
	\end{split}
	\end{equation}
	Note that 
	\begin{equation}
	\label{e:cm_var_max}
	\max_j |\overline{x^t}_j| \sim \max_{j, k} |x^t_j - x^t_k|, \quad \max_{j}
	| \overline{\xi^t}_j | \sim \max_{j, k} | \xi^t_j - \xi^t_k|.
	\end{equation}
	
	Consider the operator
	\[
	D = \partial_t + \overline{\xi^t} \partial_x.
	\]
	We compute
	\begin{equation}
	\nonumber
	\begin{split}
	-D\Phi &= \sum \sigma_j h(t, z_j^t) + \sum \sigma_j [(x -
	x_j^t) \partial_x V(t, x_j^t) - \overline{\xi^t} \xi_j^t ]\\
	&= \tfrac{1}{2}\sum \sigma_j |\overline{\xi^t}_j|^2 + \sum \sigma_j \bigl[
	V(t, x_j^t) + (x-x_j^t) \partial_x V (t, x_j^t)\bigr].
	\end{split}
	\end{equation}
	This is more transparent when expressed in the relative
	variables $\overline{x^t}_j$ and $\overline{\xi^t}_j$.  Each term in the
	second sum can be written as
	\begin{align*}
	V(t, \overline{x^t} &+ \overline{x^t}_j) + (x - x_j^t) \partial_x V(t,\overline{x^t} + \overline{x^t}_j)\\
	&= V(t, \overline{x^t} + \overline{x^t}_j) - V(t, \overline{x^t}) -\overline{x^t}_j \partial_x V(t, \overline{x^t})
	+ V(t, \overline{x^t}) + \overline{x^t}_j \partial_x V (t, \overline{x^t})+ (x-x_j^t) \partial_x V(t, \overline{x^t})\\
	&\quad+ (x-x_j^t) ( \partial_x V(t, \overline{x^t} + \overline{x^t}_j)- \partial_x V(t, \overline{x^t}) )\\
	&= V^{\overline{z}} (t, \overline{x^t}_j) + V(t, \overline{x^t}) +(x-x_j^t)  [\partial_x V^{\overline{z}}](t,\overline{x^t}_j) +(x-\overline{x^t}) \partial_x V(t, \overline{x^t}),
	\end{align*}
	where
	\begin{equation}
	\label{e:V_cm}
	V^{\overline{z}}(t, x) = V(t, \overline{x^t} + x) - V(t, \overline{x^t}) -
	x \partial_x V(t, \overline{x^t}) = x^2 \int_0^1
	(1-s)\partial^2_x V(t, \overline{x^t} + s x) \, ds.
	\end{equation}
	The terms without the subscript $j$ cancel upon summing, and we obtain
	\begin{equation}
	\label{e:vf_deriv}
	-D\Phi = \tfrac{1}{2} \sum \sigma_j |\overline{\xi^t}_j|^2 + \sum
	\sigma_j [V^{\bar{z}}(t, \overline{x^t}_j) + (x-x_j^t) [\partial_x
	V^{\overline{z}}](t, \overline{x^t}_j)].
	\end{equation}
	Therefore the contribution to~$D\Phi$ from $V$ depends
	essentially only on the differences $x^t_j - x^t_k$. Invoking
	\eqref{e:support}, \eqref{e:xi_constancy}, 
	and~\eqref{e:cm_var_max}, we see that 
	the second sum is at most $O(2^{2\ell^*})$.
	
	Note also that
	\[
	(\overline{\xi^t}_j)^2 = (\overline{\xi}_j)^2 + O(2^{2\ell^*}),
	\]
	as can be seen via \eqref{e:cm_derivs},
	the fundamental theorem of calculus, and the time restriction 
	\eqref{e:support}. It follows that if
	\begin{equation}
	\label{e:energy_cond}
	\Bigl | \sum_j \sigma_j ( \overline{\xi}_j)^2 \Bigr | \ge
	C \cdot 2^{2\ell^*}
	\end{equation}
	for some large constant $C > 0$, then on the support of
	the integrand
	\begin{equation}
	\label{e:phasederiv_lower_bound}
	\begin{split}
	|D \Phi| \gtrsim \Bigl | \sum_{j} \sigma_j
	(\overline{\xi}_j)^2 \Bigr | &= \tfrac{1}{2}\bigl | (\overline{\xi}_1 +
	\overline{\xi}_2)^2 - (\overline{\xi}_3 + \overline{\xi}_4)^2 +
	(\overline{\xi}_1 - \overline{\xi}_2)^2 - (\overline{\xi}_3 -
	\overline{\xi}_4)^2\bigr| \\
	&=\tfrac12 \bigl | |\xi_1 - \xi_2|^2 - |\xi_3 -
	\xi_4|^2 \bigr |,
	\end{split}
	\end{equation}
	where the last inequality follows from the fact that
	$\overline{\xi}_1 + \overline{\xi}_2 + \overline{\xi}_3 +
	\overline{\xi}_4 = 0$.
	
	The second derivative of the phase is
	\begin{equation}
	\nonumber
	\begin{split}
	-D^2 \Phi
	&= \sum \sigma_j \overline{\xi^t}_j \bigl[\tfrac{1}{4}\sum_k \partial_x V (t, x_k^t) - \partial_x V (t, x_j^t) \bigr]+ \sum_j \sigma_j (x -x_j^t) \xi_j^t \partial^2_x V(t, x_j^t) \\
	&\quad+ \overline{\xi^t} \sum	\sigma_j \partial_x V (t, x_j^t)+ \sum \sigma_j [\partial_t V(t, x_j^t) +(x-x_j^t) \partial_t \partial_x V(t, x_j^t)]\\
	&= \sum  \sigma_j \overline{\xi^t}_j \bigl[\tfrac{1}{4}\sum_k \partial_x V(t, x_k^t) - \partial_x V(t, x_j^t) \bigr]+ \sum \sigma_j	(x - x_j^t) \overline{\xi^t}_j \partial_x^2 V(t, x_j^t)\\
	&\quad+ \sum \sigma_j [\partial_t V (t, x_j^t) +	(x-x_j^t) \partial_t \partial_x V (t, x_j^t)] +	\overline{\xi^t} \sum \sigma_j[ \partial_x V(t, x_j^t) +(x-x_j^t) \partial^2_x V (t, x_j^t)].
	\end{split}
	\end{equation}
	We rewrite the last two sums as before to obtain
	\begin{equation}
	\label{e:phase_second_deriv}
	\begin{split}
	-D^2 \Phi &= \sum \sigma_j \overline{\xi^t}_j \bigl[\tfrac{1}{4}
	\sum_k \partial_x V(t, x_k^t) - \partial_x V(t, x_j^t) \bigr]+ \sum \sigma_j
	(x - x_j^t) \overline{\xi^t}_j \partial_x^2 V(t, x_j^t) \\
	&\quad+ \sum \sigma_j [(\partial_t V)^{\overline{z}} (t, \overline{x^t}_j) +
	(x-x_j^t) \partial_x (\partial_t V)^{\overline{z}} (t,
	\overline{x^t}_j)]\\
	&\quad+ \overline{\xi^t} \sum \sigma_j [(\partial_x V)^{\overline{z}} (t, 
	\overline{x^t}_j) +
	(x-x_j^t) \partial_x (\partial_x V)^{\overline{z}} (t,
	\overline{x^t}_j)],
	\end{split}
	\end{equation}
	where
	\[
	\begin{split}
	(\partial_t V)^{\overline{z}}(t, x) &= x^2 \int_0^1
	(1-s)\partial^2_x \partial_t V(t, \overline{x^t} + s x) \, ds\\
	(\partial_x V)^{\overline{z}}(t, x) &= x^2 \int_0^1
	(1-s)\partial^3_x V(t, \overline{x^t} + s x) \, ds.
	\end{split}
	\]

	

	Assume that~\eqref{e:energy_cond} holds. Write $e^{i\Phi} = 
	\frac{D\Phi}{i|D\Phi|^2} \cdot D e^{i\Phi}$ and integrate by parts to get
	\[
	\begin{split}
	K^{\vec{\ell}}_0(\vec z) &\lesssim \Bigl | \int e^{i\Phi} \frac{ D^2 \Phi}{ (D \Phi)^2} 
	\prod U_j \psi(x - x_j^t) \theta_{\ell_j} (x-x_j^t) \, \eta(t) dx dt \Bigr| 
	\\
	&+ \Bigl| \int e^{i\Phi} \frac{1}{(D\Phi)} D \prod  U_j \psi(x - x_j^t) 
	\theta_{\ell_j} (x-x_j^t) \, \eta(t) dx dt \Bigr|\\
	&\lesssim \Bigl | \int e^{i\Phi} \fr{ D^2 \Phi}{(D
		\Phi)^2}  \prod U_j \psi (x - x_j^t) \theta_{\ell_j}
	(x - x_j^t)  \, \eta(t) dx dt \Bigr |\\
	&+ \Bigl | \int e^{i\Phi} \fr{2D^2 \Phi}{(D
		\Phi)^3} D \prod U_j \psi(x - x_j^t)
	\theta_{\ell_j} (x - x_j^t) \, \eta(t) dx dt\Bigr | \\
	&+  \Bigl | \int e^{i\Phi} \fr{1}{(D \Phi)^2} D^2\prod U_j \psi(x - x_j^t)
	\theta_{\ell_j} (x - x_j^t) \, \eta(t) dx dt\Bigr |\\
	&= I + II + III.
	\end{split}\]
	Note that after the first integration by parts, we only repeat the 
	procedure 
	for the second term. The point of this is to avoid higher derivatives of 
	$\Phi$, which may be unacceptably large due to factors of 
	$\overline{\xi^t}$.
	
	Consider first the contribution from $I$. Write $I \le I_a + I_b + I_c$, 
	where $I_a$,
	$I_b$, $I_c$ correspond respectively to the first, second, and
	third lines in the expression~\eqref{e:phase_second_deriv} for
	$D^2 \Phi$.
	
	In view of \eqref{e:wavepacket_schwartz_tails},
	\eqref{e:xi_constancy}, \eqref{e:cm_derivs}, and 
	\eqref{e:phasederiv_lower_bound},
	we have
	\[
	\begin{split}
	I_a &\lesssim_N \int \fr{ 2^{\ell^*} \sum_j |\overline{\xi^t}_j|}{
		|D \Phi|^2} \prod_j 2^{-\ell_j N} \chi \Bigl ( \fr{x -
		x_j^t}{2^{\ell_j}} \Bigr ) \, \eta(t) dx dt\\
	&\lesssim_N  \fr{ 2^{2\ell^*}(1 + \sum |\overline{\xi}_j|)}{ \bigl | (\xi_1 -
		\xi_2)^2 - (\xi_3 - \xi_4)^2 \bigr |^2} \cdot  \int \prod_j
	2^{-\ell_j N} \chi\Bigl (\fr{x - x_j^t}{2^{\ell_j}}\Bigr ) \,
	\eta(t) dx dt\\
	&\lesssim_N 2^{-\ell^*N} \cdot \fr{ \langle \xi_1 + \xi_2 - \xi_3 -
		\xi_4 \rangle + |\xi_1 - \xi_2| + |\xi_3 - \xi_4|}{ \bigl | (\xi_1 - 
		\xi_2)^2 -
		(\xi_3 - \xi_4)^2 \bigr |^2} \cdot  \fr{1}{1 + |\xi_1 -
		\xi_2| + |\xi_3 - \xi_4|},
	\end{split}
	\]
	where we have observed that 
	\[
	\begin{split}
	\sum_j |\overline{\xi}_j| &\sim \bigl (\sum_j
	|\overline{\xi}_j|^2\bigr)^{1/2} \sim \bigl( |\overline{\xi}_1 +
	\overline{\xi}_2|^2 + |\overline{\xi}_1 - \overline{\xi}_2|^2 +
	|\overline{\xi}_3 + \overline{\xi}_4|^2 + |\overline{\xi}_3 -
	\overline{\xi}_4|^2 \bigr)^{1/2}\\
	&\lesssim |\xi_1 + \xi_2 - \xi_3 - \xi_4|+ |\xi_1 - \xi_2| + |\xi_3 - 
	\xi_4|.
	\end{split}
	\]
	Similarly,
	\[
	\begin{split}
	I_b &\lesssim_N \int \fr{2^{2\ell^*}}{|D\Phi|^2} \prod_j
	2^{-\ell_j N} \chi \Bigl (\fr{x - x_j^t}{2^{\ell_j}}
	\Bigr ) \, \eta(t) dx dt\\
	&\lesssim_N \fr{2^{-\ell^* N}}{ \bigl | (\xi_1 - \xi_2)^2 -
		(\xi_3 - \xi_4)^2 \bigr |^2} \cdot  \fr{1}{1 + |\xi_1 -
		\xi_2| + |\xi_3 - \xi_4|}.
	\end{split}
	\]
	To estimate $I_c$, use the decay hypothesis
	$|\partial_x^3 V| \lesssim \langle x \rangle^{-1-\varepsilon}$ to
	obtain
	\[
	\begin{split}
	I_c &\lesssim_N \int \fr{2^{2\ell^*} |\overline{\xi^t}|}{|D
		\Phi|^2} \Bigl(\int_0^1 \sum_j \langle \overline{x^t} + 
	s\overline{x^t}_j
	\rangle^{-1-\varepsilon} \, ds\Bigr) \prod_j 2^{-\ell_j N} \chi
	\Bigl (\fr{x - x_j^t}{2^{\ell_j} } \Bigr ) \, \eta(t) dx
	dt\\
	&\lesssim_N \fr{2^{-\ell^* N}}{ |(\xi_1 - \xi_2)^2 - (\xi_3 -
		\xi_4)^2|^2}  \int_0^1\sum_j \int_{|t| \le \delta_0} |\overline{\xi^t}|
	\langle \overline{x^t} + s \overline{x^t}_j \rangle^{-1-\varepsilon}
	\, dt ds.
	\end{split}
	\]
	The integral on the right is estimated in the following technical lemma.
	\begin{lma}
		\label{l:technical_lma_2}
		\begin{equation}
		\nonumber
		\int_0^1\sum_j \int_{|t| \le \delta_0} |\overline{\xi^t}|
		\langle \overline{x^t} + s \overline{x^t}_j \rangle^{-1-\varepsilon}
		\, dt ds = O(2^{(2 + \varepsilon)\ell^*}).
		\end{equation}
	\end{lma}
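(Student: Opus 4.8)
The plan is to exploit the strong spatial decay in~\eqref{e:V_h2}---which is exactly why the power $1+\varepsilon$ appears in the integrand---to replace the shifted weight $\langle \overline{x^t}+s\overline{x^t}_j\rangle^{-1-\varepsilon}$ by the unshifted $\langle\overline{x^t}\rangle^{-1-\varepsilon}$, and then to estimate the resulting one-variable integral by analyzing the ``center of mass'' trajectory $t\mapsto(\overline{x^t},\overline{\xi^t})$ as the solution of an ODE. Concretely: wherever the integrand is nonzero one has $|\overline{x^t}_j|\lesssim 2^{\ell^*}$ for every $j$, by~\eqref{e:xi_constancy} and~\eqref{e:cm_var_max}; since $2^{\ell^*}\geq 1$, distinguishing $|\overline{x^t}|\leq C2^{\ell^*}$ from $|\overline{x^t}|>C2^{\ell^*}$ shows $\langle\overline{x^t}+s\overline{x^t}_j\rangle\gtrsim 2^{-\ell^*}\langle\overline{x^t}\rangle$ uniformly for $s\in[0,1]$, hence $\langle\overline{x^t}+s\overline{x^t}_j\rangle^{-1-\varepsilon}\lesssim 2^{(1+\varepsilon)\ell^*}\langle\overline{x^t}\rangle^{-1-\varepsilon}$. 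Summing over the four indices $j$ and integrating in $s$, it suffices to prove
\[
\int_{|t|\leq\delta_0} |\overline{\xi^t}|\,\langle\overline{x^t}\rangle^{-1-\varepsilon}\,dt\lesssim 2^{\ell^*}.
\]

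To this end I would first record a priori bounds on the trajectory. From $\tfrac{d}{dt}\overline{x^t}=\overline{\xi^t}$, $\tfrac{d}{dt}\overline{\xi^t}=-\tfrac14\sum_j\partial_x V(t,x_j^t)$, the bound $\|\partial_x^2 V\|_{L^\infty}\leq M_2$, the interpolation bound $|\partial_x V(t,0)|\lesssim_{M_2}1$ (a consequence of~\eqref{e:V_h1}), and $|x_j^t|\leq|\overline{x^t}|+C2^{\ell^*}$, one gets $|\tfrac{d}{dt}\overline{\xi^t}|\lesssim_{M_2}\langle\overline{x^t}\rangle+2^{\ell^*}$, whence Gronwall's inequality yields
\[
\sup_{|t|\leq\delta_0}\bigl(\langle\overline{x^t}\rangle+|\overline{\xi^t}|\bigr)\lesssim_{M_2} M_0:=\langle\overline{x^0}\rangle+|\overline{\xi^0}|+2^{\ell^*}.
\]
Let $F(y)=\int_0^y\langle u\rangle^{-1-\varepsilon}\,du$, so that $\|F\|_{L^\infty}=C_\varepsilon<\infty$ and $\tfrac{d}{dt}F(\overline{x^t})=\overline{\xi^t}\langle\overline{x^t}\rangle^{-1-\varepsilon}$. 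I would then split into three regimes. If $\langle\overline{x^0}\rangle$ and $|\overline{\xi^0}|$ are both $\lesssim 2^{\ell^*}$, then $M_0\lesssim 2^{\ell^*}$ and the integral is at most $\int|\overline{\xi^t}|\,dt\leq 2\delta_0\sup|\overline{\xi^t}|\lesssim 2^{\ell^*}$. If $|\overline{x^0}|$ is large compared to $\delta_0 M_0$, the Gronwall bound gives $|\overline{x^t}|\geq\tfrac12|\overline{x^0}|$ throughout, so the integral is $\lesssim|\overline{x^0}|^{-1-\varepsilon}\delta_0 M_0\lesssim 1$, using that $M_0\lesssim_{\delta_0}\langle\overline{x^0}\rangle$ in this regime. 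In the remaining regime one has $|\overline{\xi^0}|\gg 2^{\ell^*}$ with $|\overline{x^0}|\lesssim\delta_0|\overline{\xi^0}|$, hence $\sup|\overline{x^t}|\lesssim_{M_2}|\overline{\xi^0}|$ and $|\tfrac{d}{dt}\overline{\xi^t}|\lesssim_{M_2}|\overline{\xi^0}|$; choosing $\delta_0$ small depending only on $M_2$ then forces $|\overline{\xi^t}|\geq\tfrac12|\overline{\xi^0}|>0$ on $[-\delta_0,\delta_0]$, so $\overline{x^t}$ is monotone and the integral equals $|F(\overline{x^{\delta_0}})-F(\overline{x^{-\delta_0}})|\leq 2C_\varepsilon\lesssim 1$.

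The conceptual role of~\eqref{e:V_h2} is precisely this: without the decay one could only say $\int|\overline{\xi^t}|\,dt\lesssim\delta_0 M_0$, which fails to be $\lesssim 2^{\ell^*}$ exactly when the center of mass is far out or fast-moving---and those are the cases absorbed by the second and third regimes above. The main technical obstacle is that, a priori, $\overline{\xi^t}$ may vanish, and change sign, arbitrarily often on $[-\delta_0,\delta_0]$ (one cannot in general bound the number of turning points of the center of mass), so a global change of variables is unavailable; the trichotomy is arranged so that sign changes are only possible in the first regime, where $\sup|\overline{\xi^t}|\lesssim 2^{\ell^*}$ already suffices, while in the other two $\overline{x^t}$ is either bounded below or monotone. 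Some bookkeeping is then needed to check that the several ``$\delta_0$ sufficiently small'' requirements are mutually compatible and depend only on the seminorms $M_k$ of $V$.
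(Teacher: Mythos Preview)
Your proof is correct and follows the same overall pattern as the paper---remove the $s\overline{x^t}_j$ shift at a cost of $2^{(1+\varepsilon)\ell^*}$, then control the remaining one-variable integral by a position-versus-velocity dichotomy for the center-of-mass trajectory---but differs in one organizational choice. The paper inserts an intermediate step: it replaces the \emph{average of bicharacteristics} $(\overline{x^t},\overline{\xi^t})$ by the \emph{bicharacteristic of the average} $(\overline{x}^t,\overline{\xi}^t)$, i.e.\ the genuine Hamiltonian ray starting at $(\overline{x},\overline{\xi})$, after checking via Gronwall that the two differ by $O(2^{\ell^*})$. This reduces the target to $\int_{|t|\le\delta_0}|\overline{\xi}^t|\langle\overline{x}^t\rangle^{-1-\varepsilon}\,dt=O(1)$ for a trajectory satisfying the clean ODE $\dot x=\xi$, $\dot\xi=-\partial_x V(t,x)$, and the analysis then needs only two cases ($|\overline{x}|\le|\overline{\xi}|$ versus $|\overline{x}|\ge|\overline{\xi}|$). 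You instead work directly with $(\overline{x^t},\overline{\xi^t})$, whose evolution equation carries an extra $O(2^{\ell^*})$ forcing term; this is why you need the third ``small'' regime and why your reduced integral is only claimed to be $O(2^{\ell^*})$ rather than $O(1)$. Your antiderivative device $F$ is exactly the paper's change of variables $y=\overline{x}^t$ in disguise. Both routes yield the stated $O(2^{(2+\varepsilon)\ell^*})$; the paper's replacement step buys a slightly tidier case analysis, while your approach avoids introducing the auxiliary trajectory.
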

	
	\begin{proof}
		It will be convenient to replace the average bicharacteristic
		$(\overline{x^t}, \overline{\xi^t})$ with the ray  $(\overline{x}^t, 
		\overline{\xi}^t)$ starting from the average initial data. We claim that
		\[
		|\overline{x^t} - \overline{x}^t| + | \overline{\xi^t} - 
		\overline{\xi}^t | = 
		O(2^{\ell^*})
		\] during the relevant
		$t$, for Hamilton's equations imply that
		\[
		\begin{split}
		\overline{x^t} - \overline{x}^t &= - \int_0^t (t-\tau) \Bigl ( \fr{1}{4}
		\sum_k \partial_x V(\tau, x_k^\tau) - \partial_x V (\tau,
		\overline{x}^\tau) \Bigr ) \, d\tau\\
		&= - \int_0^t (t-\tau) \Bigl ( \fr{1}{4} \sum_k ( \overline{x^\tau}_k +
		\overline{x^\tau} - \overline{x}^\tau) \int_0^1 \partial^2_x V
		(\tau, \overline{x}^\tau + s(x_k^\tau - \overline{x}^\tau)) \, ds
		\Bigr ) \, d\tau\\
		&= -\int_0^t(t-\tau)(\overline{x^\tau} - \overline{x}^\tau) 
		\Bigl[\int_0^1
		\fr{1}{4}\sum_k \partial^2_x V (\tau, \overline{x}^\tau + s(x_k^\tau -
		\overline{x}^\tau)) \, ds \Bigr] \, d\tau + O(2^{\ell^*}t^2),
		\end{split}
		\]
		and we can invoke Gronwall. Similar considerations yield the bound for 
		$|\overline{\xi^t} - 
		\overline{\xi}^t|$.
		As also $\overline{x^t}_j = O(2^{\ell^*})$, we are reduced to showing
		\begin{equation}
		\label{e:Ic_simplified_integral}
		\int_{|t| \le \delta_0} |\overline{\xi}^t| \langle
		\overline{x}^t \rangle^{-1-\varepsilon} \, dt = O(1).
		\end{equation}
		
		Integrating the ODE
		\[
		\tfr{d}{dt} \overline{x}^t = \overline{\xi}^t \quad \text{and}\quad \tfr{d}{dt}
		\overline{\xi}^t = - \partial_x V(t, \overline{x}^t)
		\]
		yields the estimates
		\[
		\begin{split}
		|\overline{x}^t - \overline{x}^s - (t-s) \overline{\xi}^s| &\le C 
		|t-s|^2(|\overline{x}^s| + |(t-s) \overline{\xi}^s|)\\
		|\overline{\xi}^t - \bar{\xi}^s| &\le C |t-s| (|\overline{x}^s| + 
		|(t-s)\overline{\xi}^s|)
		\end{split}
		\]
		for some constant $C$ depending on $ \| \partial_x^2 V\|_{L^\infty}$.
		By subdividing the time interval $[-\delta_0, \delta_0]$ if necessary, 
		we may 
		assume 
		in~\eqref{e:Ic_simplified_integral} that $(1+C)|t| \le 
		1/10$.
		
		Consider separately the cases
		$|\overline{x}| \le |\overline{\xi}|$ and
		$|\overline{x}| \ge |\overline{\xi}|$. When
		$|\overline{x}| \le |\overline{\xi}|$ we have
		\[
		2 |\overline{\xi}| \ge |\overline{\xi}^t| \ge |\overline{\xi}| - 
		\tfr{1}{5}|\overline{\xi}| \ge \tfr{1}{2} |\overline{\xi}|
		\]
		(assuming, as we may, that $|\overline{\xi}| \ge 1$) and the 
		bound~\eqref{e:Ic_simplified_integral} follows from the change of 
		variables 
		$y = \bar{x}^t$. If instead
		$|\overline{x}| \ge |\overline{\xi}|$, then
		$|\overline{x}^t| \ge \tfr{1}{2}|\overline{x}|$ and $ |\overline{\xi}^t|
		\le 2 |\overline{x}|$, which also yields the desired bound.
	\end{proof}
	
	Returning to $I_c$, we conclude that
	\[
	I_c \lesssim_N \fr{2^{-\ell^*N}}{ | (\xi_1 - \xi_2)^2 - (\xi_3 -
		\xi_4)^2 |^2}.
	\]
	Overall,
	\[
	\begin{split}
	I &\le I_a + I_b + I_c \lesssim_N 2^{-\ell^* N}  \fr{\langle \xi_1 + \xi_2 
		- \xi_3 -  \xi_4 \rangle}{ |(\xi_1 - \xi_2)^2 - (\xi_3 - \xi_4)^2|^2}.
	\end{split}
	\]
	
	For $II$, we have
	\begin{equation}
	\label{e:II_one_deriv}
	D[U_j \psi(x - x_j^t)] = -i H_j U_j  \psi(x - x_j^t) -
	\overline{\xi^t}_j \partial_x U_j \psi(x - x_j^t)
	\end{equation}
	and estimating as for $I$ we get
	\[
	\begin{split}
	II &\lesssim_N \fr{1+ \sum_j |\overline{\xi}_j|}{ |(\xi_1 - \xi_2)^2 -
		(\xi_3 - \xi_4)^2|} \int \fr{|D^2 \Phi|}{|D \Phi|^2} \prod 2^{-\ell_j N}
	\chi \Bigl(\fr{x - x_j^t}{2^{\ell_j}} \Bigr) \, \eta dx
	dt\\
	&\lesssim_N 2^{-\ell^* N} \Bigl (\fr{ \langle \xi_1 + \xi_2 - \xi_3 -
		\xi_4\rangle + |\xi_1 - \xi_2 | + |\xi_3 - \xi_4|}{ | (\xi_1 -
		\xi_2)^2 - (\xi_3 - \xi_4)^2|} \Bigr)\fr{\langle \xi_1 + \xi_2 - \xi_3 
		-  \xi_4 \rangle}{ |(\xi_1 - \xi_2)^2 - (\xi_3 - \xi_4)^2|^2}.
	\end{split}
	\]
	%
	%
	
	It remains to consider $III$. The derivatives can distribute in
	various ways:
	\begin{equation}
	\begin{split}\label{e:III_expansion}
	III &\lesssim \fr{1}{ |(\xi_1 - \xi_2)^2 - (\xi_3 - \xi_4)^2|^2} \Bigl \{\int
	|D^2[U_1 \psi(x - x_1^t)] \prod_{j=2}^4 U_j \psi
	(x - x^t_j) \prod_{k=1}^4 \theta_{\ell_k} (x -
	x_k^t)  \eta| \, dx dt\\
	&\quad+  \int \Bigl | D [U_1 \psi(x - x_1^t)] D [U_2
	\psi(x - x_2^t)] \prod_{j=3}^4 U_j \psi(x -
	x^t_j) \prod_{k=1}^4 \theta_{\ell_k} (x -
	x_k^t)  \eta \Bigr | \, dx dt\\
	&\quad+ \int \Bigl| D \prod_j U_j \psi(x -
	x^t_j) D \prod_k \theta_{\ell_k}(x -
	x_k^t) \eta \Bigr | \, dx dt\\
	&\quad+ \int \Bigl| \prod_j U_j \psi(x - x^t_j) D^2
	\prod_{k} \theta_{\ell_k} (x - x_k^t) \eta \Bigr | \, dx
	dt\Bigr\},
	\end{split}
	\end{equation}
	where the first two terms represent sums over the appropriate
	permutations of indices.
	
	We focus on the terms
	involving double derivatives of $U_j$ as the other terms can be
	dealt with as in the estimate for~$II$.  From~\eqref{e:II_one_deriv},
	\begin{equation}
	\label{e:III_two_derivs}
	\begin{split}
	D^2[U_j \psi(x - x_j^t)] &= -i\partial_t V_j (t, x-x_j^t) U_j 
	\psi(x-x_j^t)  
	- (H_j)^2 U_j \psi(x - x_j^t) \\
	& + 2i \overline{\xi^t}_j \partial_x H_j U_j \psi(x - x_j^t) -
	\Bigl[\tfrac{1}{4} \sum_k \partial_x V(t, x_k^t) - \partial_x V(t, x_j^t)
	\Bigr] \partial_x U_j \psi(x - x_j^t) \\
	&+
	(\overline{\xi^t}_j)^2 \partial^2_x U_j \psi(x - x_j^t).
	\end{split}
	\end{equation}
	
	Recalling from~\eqref{e:Vj} that
	\[
	\partial_t V_j(t, x) = x^2 \Bigl [\xi_j^t \int_0^1
	(1-s) \partial_x^3V(t, x_j^t + s x) ds +\int_0^1 (1-s) \partial_t 
	\partial^2_x V
	(t, x_j^t + sx) \, ds \Bigr ],
	\]
	it follows that
	\[
	\begin{split}
	\int \Bigl | \partial_t V_1 (t, x - x_1^t) & U_1 \psi(x - x_1^t)
	\prod_{j=2}^4 U_j \psi(x - x_j^t) \prod_{k=1}^4 \theta_{\ell_{k}} (x
	- x_k^t)\Bigr| \,
	\eta(t) dx dt\\
	&\lesssim 2^{2 \ell_1} \int \Bigl[ \int_0^1 |\xi_1^t \partial^3V_x(t,
	x_1^t + s(x - x_1^t))| \, ds
	\\
	&\quad+ \int_0^1 |\partial_t \partial^2_x V (t, x_j^t + s(x-x_j^t)) \, ds
	\Bigr] \prod_j 2^{-\ell_j N} \chi \Bigl ( \fr{x - x_j^t}{ 2^{\ell_j}
	}
	\Bigr) \, \eta dx dt\\
	&\lesssim_N 2^{-\ell^*N},
	\end{split}
	\]
	where the terms involving $\partial^3_x V$ are handled as in $I_c$ above. 
	Also, 
	from~\eqref{e:wavepacket_schwartz_tails}
	and~\eqref{e:xi_constancy},
	\[
	\begin{split}
	&\int \Bigl | (\overline{\xi}_1^t)^2 \partial_x^2 U_1 \psi(x -
	x_1^t)  \prod_{j=2}^4 U_j \psi(x - x_j^t)
	\prod_{k=1}^4 \theta_{\ell_{k}} (x - x_k^t)\Bigr| \,
	\eta(t) dx dt\lesssim_N \fr{2^{-\ell^*N} (1 + |\overline{\xi}_1|^2) }{1 + |\xi_1 -
		\xi_2| + |\xi_3 - \xi_4|}.
	\end{split}
	\]
	The intermediate terms in~\eqref{e:III_two_derivs} and the other terms
	in the the expansion~\eqref{e:III_expansion} yield similar upper
	bounds. We conclude overall that
	\[
	\begin{split}
	III 
	&\lesssim_N 2^{-\ell^*N} \Bigl( \fr{1}{ |(\xi_1 - \xi_2)^2 -
		(\xi_3 - \xi_4)^2|^2} + \fr{(1 + \sum_j |\overline{\xi}_j|)^2}{ |
		(\xi_1 - \xi_2)^2 - (\xi_3 - \xi_4)^2|^2 \cdot  (1 + |\xi_1 -
		\xi_2| + |\xi_3 - \xi_4|)} \Bigr)\\
	&\lesssim_N 2^{-\ell^*N}\Bigl ( \fr{ 1}{| (\xi_1 - \xi_2)^2 - (\xi_3 -
		\xi_4)^2|^2} + \fr{\langle \xi_1 + \xi_2 - \xi_3 - \xi_4 \rangle^2
		+ (|\xi_1 - \xi_2| + |\xi_3 - \xi_4|)^2}{\bigl| |\xi_1 - \xi_2|^2
		- |\xi_3 - \xi_4|^2 \bigr|^2 \cdot (1+ |\xi_1 - \xi_2| + |\xi_3 - 
		\xi_4|)} \Bigr)\\
	&\lesssim_N 2^{-\ell^*N} \fr{ \langle \xi_1 + \xi_2 - \xi_3 - \xi_4
		\rangle^2 + |\xi_1 - \xi_2| + |\xi_3 - \xi_4|}{ | (\xi_1 - \xi_2)^2
		- (\xi_3 - \xi_4)^2|^2}.
	\end{split}
	\]
	
	Note also that in each of the integrals $I$, $II$, and $III$ we may
	integrate by parts in $x$ to obtain arbitrarily many factors of
	$|\xi_1 + \xi_2 - \xi_3 - \xi_4|^{-1}$. All instances of
	$\langle \xi_1 + \xi_2 - \xi_3 - \xi_4 \rangle$ in the above estimates
	may therefore be replaced by $1$.
	
	Combining $I$, $II$, and $III$, under the 
	hypothesis~\eqref{e:energy_cond} we obtain
	\begin{equation}
	\nonumber
	\begin{split}
	|K^{\vec{\ell}}_0(\vec z)| \lesssim_N 2^{-\ell^*N} \fr{ 1 + |\xi_1 - \xi_2| + 
		|\xi_3 - \xi_4|}{
		\bigl |
		(\xi_1 - \xi_2)^2 - (\xi_3 - \xi_4)^2 \bigr|^2}.
	\end{split}
	\end{equation}
	Combining this with~\eqref{e:x-int_by_parts}, we get
	\begin{equation}
	\begin{split}
	| K^{\vec{\ell}}_0 (\vec z)| &\lesssim_{N_1, N_2} 2^{-\ell^* N_1} \min \Bigl(\frac{\langle
	\xi_1 + \xi_2 - \xi_3 - \xi_4\rangle^{-N_2}}{ 1 + |\xi_1 - \xi_2| + 
		|\xi_3 -
		\xi_4|}, \fr{ 1 + |\xi_1 - \xi_2| + 
		|\xi_3 -
		\xi_4|}{ \bigl | (\xi_1 - \xi_2)^2 - (\xi_3 - \xi_4)^2 \bigr|^2}
	\Bigr)
	\end{split}
	\end{equation}
	for any $N_1, N_2 > 0$. Lemma~\ref{l:K0_ptwise} now follows from 
	summing in
	$\vec{\ell}$.
	

\bibliographystyle{myamsalpha}
\bibliography{../../bibliography}

\end{document}